\title{Regular Functions on the $K$-nilpotent cone}
\begin{document}

\maketitle

\begin{abstract}
Let $G$ be a complex reductive algebraic group with Lie algebra $\fg$ and let $G_{\RR}$ be a real form of $G$ with maximal compact subgroup $K_{\RR}$. Associated to $G_{\RR}$ is a $K \times \CC^{\times}$-invariant subvariety $\cN_{\theta}$ of the (usual) nilpotent cone $\cN \subset \fg^*$. In this article, we will derive a formula for the ring of regular functions $\CC[\cN_{\theta}]$ as a representation of $K \times \CC^{\times}$.

Some motivation comes from Hodge theory. In \cite{SchmidVilonen2011}, Schmid and Vilonen use ideas from Saito's theory of mixed Hodge modules to define canonical good filtrations on many Harish-Chandra modules (including all standard and irreducible Harish-Chandra modules). Using these filtrations, they formulate a conjectural description of the unitary dual. If $G_{\RR}$ is split, and $X$ is the spherical principal series representation of infinitesimal character $0$, then conjecturally $\gr(X) \simeq \CC[\cN_{\theta}]$ as representations of $K \times \CC^{\times}$. So a formula for $\CC[\cN_{\theta}]$ is an essential ingredient for computing Hodge filtrations.
\end{abstract}

\section{Introduction}

Let $G$ be a complex connected reductive algebraic group and let $G_{\RR}$ be a real form of $G$. Choose a maximal compact subgroup $K_{\RR}$ of $G_{\RR}$ and let $\theta: G \to G$ be the corresponding Cartan involution. Let $K$ be the group of $\theta$-fixed points (i.e. the complexification of $K_{\RR}$). Note that $K$ is a complex reductive algebraic group (it is often disconnected). Write $\fg$,$\fk$ for the Lie algebras and let $\fp = \fg^{-d\theta}$. There is a Cartan decomposition
\begin{equation}\label{eq:Cartan}\fg = \mathfrak{k} \oplus \fp\end{equation}
Using (\ref{eq:Cartan}), we can identify $\fp^*$ with $(\fg/\fk)^*$, a subspace of $\fg^*$. 

Let $G \times \CC^{\times}$ act on $\fg^*$ by the usual formula
$$(g,z) \cdot \zeta = z\Ad^*(g)\zeta, \qquad g \in \widetilde{G}, \ z \in \CC^{\times}, \ \zeta \in \fg^*.$$
Note that $\fp^*$ is stable under $K \times \CC^{\times} \subset G \times \CC^{\times}$. Let $\cN$ denote the nilpotent cone in $\fg^*$. Recall that $G \times \CC^{\times}$ acts on $\cN$ (with finitely many orbits). The $K$-\emph{nilpotent cone} is the $K \times \CC^{\times}$-invariant subvariety
$$\cN_{\theta} = \cN \cap \fp^* \subset \fp^*$$
This subvariety (and the $K \times \CC^{\times}$-action on it) is closely related to the representation theory of $G_{\RR}$, see \cite{Vogan1991}. The main result of this paper is an explicit description of the ring of regular functions $\CC[\cN_{\theta}]$ as a representation of $K \times \CC^{\times}$ (in the case when $G_{\RR}$ is split modulo center). Some motivation for this problem will be discussed at the end of Section \ref{sec:Khat}.

Since $K$ is (in general) a disconnected group, its irreducible representations cannot be easily parameterized using the theory of highest weights. In \cite{Vogan2007}, Vogan gives a parameterization of a very different flavor (making essential use of the fact that $K$ is a symmetric subgroup of $G$). We will recall some of the details in Section \ref{sec:Khat}. 

There are several well-known results regarding the structure of $\CC[\cN_{\theta}]$ as a $K$-representation. In \cite{KostantRallis1971}, it is shown that for $G_{\RR}$ quasi-split, $\CC[\cN_{\theta}]$ is isomorphic as a $K$-representation to the induced representation $\Ind^K_M \mathrm{triv}$, where $M$ is centralizer in $K$ of a maximal abelian subspace $\mathfrak{a} \subset \mathfrak{p}$. Equivalently, $\CC[\cN_{\theta}]$ is isomorphic as a $K$-representation to a spherical principal series representation of $G_{\RR}$. Importantly, these results do not provide any information about the $\CC^{\times}$-action on $\CC[\cN_{\theta}]$. To understand this structure, we must adopt a different approach. First, we relate the ring of regular functions $\CC[\cN_{\theta}]$ (regarded as a representation of $K \times \CC^{\times}$) to the ring of regular functions $\CC[\cN]$ (regarded as a representation of $G \times \CC^{\times})$. For $G_{\RR}$ split modulo center, we prove the following formula in Corollary \ref{cor:formulasplit}
\begin{equation}\label{eq:mainformula}\CC[\cN_{\theta}]|_{K \times \CC^{\times}} = \CC[\cN]|_{K \times \CC^{\times}} \otimes [\wedge(\fk)]\end{equation}
Here $[\wedge(\fk)]$ denotes the signed graded exterior algebra associated to $\fk$ (this formula takes place in the Grothendieck group of admissible representations of $K \times \CC^{\times}$). The proof of this result is not purely formal---we make essential use of the fact that $\cN$ is Cohen-Macaulay and that $\cN_{\theta} \subset \cN$ is a complete intersection of codimension $\dim(\fk)$ (for the latter assertion, we use that $G_{\RR}$ is split modulo center). The structure of $\CC[\cN]$ as a $G \times \CC^{\times}$-representation is well-known (it can be computed using Lusztig's $q$-analog of Kostant's partition function, see \cite{Mcgovern1989}). We then `restrict' this description to $K \times \CC^{\times}$ to obtain a formula for $\CC[\cN_{\theta}]$. The final result is Theorem \ref{thm:branching}.

Along the way, we introduce a restriction map on equivariant K-theory
$$K^{G}(\cN) \to K^{K}(\cN_{\theta})$$
(this map is defined and studied in Section \ref{sec:Ktheory}). In fact, this map arises as the `associated graded' of a map from representations of $G$ (regarded as a real group) to representations of $G_{\RR}$ (see Remark \ref{rmk:GGR} for more details). We will pursue this point in future work.  

\subsection{Notation}

Let $R$ be an algebraic group. An algebraic $R$-representation $V$ is \emph{admissible} if every irreducible $R$-representation appears in $V$ with finite multiplicity. Consider the abelian categories
\begin{align*}
    \Rep(R) &= \text{algebraic representations of } R\\
    \Rep_a(R) &= \text{admissible algebraic representations of } R\\
    \Rep_f(R) &= \text{finite-dimensional algebraic representations of } R
\end{align*}
There are obvious embeddings $\Rep_f(R) \subset \Rep_a(R) \subset \Rep(R)$. 

Now let $\widetilde{R} = R \times \CC^{\times}$. Then $\Rep(\widetilde{R})$, $\Rep_a(\widetilde{R})$, and $\Rep_f(\widetilde{R})$ can be defined as above. We will also consider the category
\begin{align*}
\Rep_{aa}(\widetilde{R}) = &\text{algebraic representations of } \widetilde{R} \text{ which are admissible}\\
&\text{as representations of both } R \text{ and } \CC^{\times}\end{align*}
There are obvious embeddings $\mathrm{Rep}_f(\widetilde{R}) \subset \mathrm{Rep}_{aa}(\widetilde{R}) \subset \mathrm{Rep}_a(\widetilde{R}) \subset \mathrm{Rep}(\widetilde{R})$. We will denote the Grothendieck groups by $K(R),K_a(R),K_f(R),K_{aa}(\widetilde{R})$, and so on. Write $\mathrm{Irr}(R)$ for the set of (equivalence classes of) irreducible representations of $R$. Then 
$$\mathrm{Irr}(\widetilde{R}) = \{\tau q^n \mid \tau \in \mathrm{Irr}(R), \ n \in \ZZ\},$$
where $q^n$ denotes the degree-$n$ character of $\CC^{\times}$ and $\tau q^n$ is shorthand for the irreducible  $\widetilde{R}$-representation $\tau \otimes q^n$. The group $K_f(R)$ (resp. $K_f(\widetilde{R})$) can be identified with (finite) integer combinations of $\tau \in \mathrm{Irr}(R)$ (resp. $\tau q^n \in \mathrm{Irr}(\widetilde{R})$). The group $K_a(R)$ (resp. $K_a(\widetilde{R})$) can be identified with \emph{formal} integer combinations of $\tau \in \mathrm{Irr}(R)$ (resp. $\tau q^n \in \mathrm{Irr}(\widetilde{R})$). Finally, $K_{aa}(\widetilde{R})$ can be identified with formal integer combinations of $\tau q^n$ such that for each $\tau \in \mathrm{Irr}(R)$ and for each $n \in \ZZ$ only finitely many $\tau q^n$ appear with nonzero multiplicity. The tensor product of representations turns $K_f(R)$ into a commutative ring, $K_a(R)$ into a $K_f(R)$-module, and $K_{aa}(\widetilde{R})$ into a $K_f(\widetilde{R})$-module. 

If $X$ is a scheme equipped with an algebraic $R$-action, we write $\Coh^R(X)$ for the category of (strongly) $R$-equivariant coherent sheaves on $X$ and $K^R(X)$ for its Grothendieck group.

\subsection{Acknowledgments}

I would like to thank David Vogan and Yu Zhao for many helpful discussions. 

\section{Irreducible representations of $K$}\label{sec:Khat}

In this section, we will recall a parameterization of $\mathrm{Irr}(K)$ due to Vogan (\cite{Vogan2007}).

Suppose $H$ is a $\theta$-stable maximal torus in $G$. Write $\Delta = \Delta(G,H)$ for the roots of $H$ on $\fg$. Since $H$ is $\theta$-stable, $\theta$ acts on $\Delta$ by $\alpha \mapsto \alpha \circ \theta$. A root $\alpha \in \Delta$ is imaginary (resp. real, resp. complex) if $\theta\alpha = \alpha$ (resp. $\theta\alpha = -\alpha$, resp. $\theta\alpha \notin \{\pm \alpha\}$). If $\alpha$ is imaginary, then $\theta$ acts on the root space $\fg_{\theta}$ by $\pm \mathrm{Id}$. We say that $\alpha$ is compact (resp. noncompact) if $\theta|_{\fg_{\alpha}} = \mathrm{Id}$ (resp. $\theta|_{\fg_{\alpha}} = -\mathrm{Id}$). So $\Delta$ is partitioned
$$\Delta = \Delta_{i\RR} \sqcup \Delta_{\RR} \sqcup \Delta_{\CC}$$
into imaginary, real, and complex roots, and $\Delta_{i\RR}$ is partitioned
$$\Delta_{i\RR} = \Delta_c \sqcup \Delta_n$$
into compact and noncompact roots. Note that $\Delta_{i\RR}$ is a root system. If we choose a positive system $\Phi^+ \subset \Delta_{i\RR}$, we can define the element $\rho_{i\RR} = \frac{1}{2}\sum \Phi^+ \in \fh^*$.

\begin{definition}[Sec 6, \cite{ATLV}]\label{def:Langlands}
A continued Langlands parameter for $(G,K)$ is a triple $(H,\gamma,\Phi^+)$ where
\begin{enumerate}
    \item $H$ is a $\theta$-stable maximal torus in $G$.
    \item $\gamma$ is a formal sum $\gamma_0+\rho_{i\RR}$, where $\gamma_0$ is a one-dimensional $(\fh,H^{\theta})$-module (we define $d\gamma = d\gamma_0 + \rho_{i\RR}$).
    \item $\Phi^+$ is a positive system for $\Delta_{i\RR}$.
\end{enumerate}
There is a $K$-action on the set of continued Langlands parameters. Two parameters are \emph{equivalent} if they are conjugate under $K$. A continued Langlands parameter is \emph{standard} if
\begin{itemize}
    \item[(4)] For every $\alpha \in \Phi^+$, $\langle d\gamma, \alpha^{\vee}\rangle \geq 0$.
\end{itemize}
A standard Langlands parameter is \emph{nonzero} if
\begin{itemize}
    \item[(5)] For every $\alpha \in \Phi^+$ which is simple and compact, $\langle d\gamma,\alpha^{\vee}\rangle \neq 0$. 
\end{itemize}
A nonzero Langlands parameter is \emph{final} if
\begin{itemize}
    \item[(6)] If $\alpha \in \Delta_{\RR}$ and $\langle d\gamma,\alpha^{\vee}\rangle =0$, then $\alpha$ does not satisfy the Speh-Vogan parity condition (\cite{SpehVogan}).
\end{itemize}
Write $\mathcal{P}_L(G,K)$ for the set of equivalence classes of final Langlands parameters.
\end{definition}

To the equivalence class of a continued Langlands parameter $\Gamma$, one can associate a virtual Harish-Chandra module $[I(\Gamma)]$. If $\Gamma$ is standard, then $[I(\Gamma)]$ is represented by a distinguished $(\fg,K)$-module $I(\Gamma)$. If $\Gamma$ is nonzero, then $I(\Gamma)\neq 0$. If $\Gamma$ is final, there is a unique irreducible quotient $I(\Gamma) \twoheadrightarrow J(\Gamma)$. The following is a version of the Langlands classification.

\begin{theorem}[Thm 6.1, \cite{ATLV}]
The map $\Gamma \mapsto J(\Gamma)$ defines a bijection between $\mathcal{P}_L(G,K)$ and isomorphism classes of irreducible $(\fg,K)$-modules. 
\end{theorem}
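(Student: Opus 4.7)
The plan is to follow the standard Langlands/Vogan strategy: to each standard parameter attach a module built by real parabolic induction from a limit of discrete series on a cuspidal Levi, verify that finality is exactly the condition needed to make the resulting Langlands quotient well-defined and nontrivial, and then invert the construction using infinitesimal character and lowest $K$-type data.

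First, given a standard $\Gamma = (H,\gamma,\Phi^+)$, I would build $I(\Gamma)$ as follows. Use the real roots of $H$ on $\mathfrak{g}$ to define a cuspidal real parabolic subgroup $P = MAN$ of $G_{\RR}$, chosen so that $H \cap M$ is a fundamental Cartan of $M$ whose root system is exactly $\Delta_{i\RR}$. The data $(\gamma_0,\Phi^+)$ restrict to a (limit of) discrete series parameter on $M$ with Harish-Chandra parameter $d\gamma|_{\mathfrak{h} \cap \mathfrak{m}}$, producing a module $X_M(\Gamma)$; tensor with the character $e^\nu$ of $A$ corresponding to $d\gamma|_{\mathfrak{a}}$ and set $I(\Gamma) = \mathrm{Ind}_P^G(X_M(\Gamma) \otimes e^\nu \otimes \mathbf{1}_N)$. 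Condition (4) translates into weak dominance of $\mathrm{Re}\,\nu$ for $N$, so we are in the range of the Langlands quotient construction, and condition (5)---nonvanishing on compact simple roots---is exactly the Vogan/Zuckerman criterion for $X_M(\Gamma) \neq 0$, hence for $I(\Gamma) \neq 0$.

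Next, to get the unique irreducible quotient $J(\Gamma)$ I would invoke the Langlands quotient theorem when $\mathrm{Re}\,\nu$ is strictly dominant, and on the walls use Hecht--Schmid-type $SL_2$ reduction along each real root $\alpha$ with $\langle d\gamma,\alpha^\vee\rangle = 0$. Condition (6) says the Speh--Vogan parity condition fails at every such $\alpha$, which is precisely the criterion that prevents the standard module from splitting into two inequivalent pieces with distinct Langlands quotients; it thereby guarantees that $I(\Gamma)$ still has a unique irreducible quotient $J(\Gamma)$. For the bijection, injectivity follows by reading $\Gamma$ back off $J(\Gamma)$: the infinitesimal character recovers the $W$-orbit of $d\gamma$, Vogan's lowest $K$-type theory isolates the Cartan conjugacy class of $H$ and the positive system $\Phi^+$, and the character $\gamma_0$ on $H^\theta$ is pinned down by the action on the lowest $K$-type. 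Surjectivity follows from Harish-Chandra's subquotient theorem---every irreducible $(\mathfrak{g},K)$-module embeds into a principal series, hence arises as a subquotient of some $I(\Gamma)$---combined with the fact that by iterated Cayley transforms and wall reductions one can replace any standard parameter for which a given irreducible appears as a quotient by a final parameter doing the same job.

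The delicate step, and the one I expect to absorb most of the real work, is matching the combinatorial conditions (4)--(6) with the analytic facts about standard modules: nonvanishing, existence of a unique Langlands quotient, and absence of hidden equivalences $J(\Gamma) \simeq J(\Gamma')$ between distinct final parameters. The Speh--Vogan parity condition in (6) is the subtle ingredient here, since it governs when reducibility along a real root produces a second Langlands quotient; showing that finality is both necessary and sufficient to kill these extra quotients requires a careful $SL_2$-reduction to the real-rank-one case and an analysis of the irreducibility pattern of the associated principal series, which is really the heart of the classification.
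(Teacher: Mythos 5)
The paper does not prove this statement: it is quoted verbatim as Theorem 6.1 of \cite{ATLV} and used as a black box, so there is no internal proof to compare your proposal against. What you have written is a faithful outline of the standard argument in the cited literature (Langlands' original classification, the Knapp--Zuckerman tempered theory, and Vogan's reformulation via lowest $K$-types, which is the form adopted in \cite{ATLV} and \cite{Vogan2007}): construct $I(\Gamma)$ by real parabolic induction from a limit of discrete series on the cuspidal Levi cut out by the real roots of $H$, match condition (4) with weak dominance of the continuous parameter, condition (5) with the Hecht--Schmid/Zuckerman nonvanishing criterion for limits of discrete series, and condition (6) with the Speh--Vogan reducibility criterion along real roots; then recover $\Gamma$ from $J(\Gamma)$ and use the subquotient theorem together with Cayley transforms for surjectivity. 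Two places in your sketch carry essentially all of the content and would need the imported theorems made explicit. First, uniqueness of the irreducible quotient when $\langle d\gamma,\alpha^{\vee}\rangle=0$ for some real $\alpha$ is not a consequence of the Langlands quotient theorem (which needs strict dominance); it rests on the Knapp--Zuckerman irreducibility theorem for final tempered data --- the same fact the paper records later as $I(\Gamma)=J(\Gamma)$ for tempered final $\Gamma$. Second, injectivity requires ruling out accidental isomorphisms $J(\Gamma)\simeq J(\Gamma')$ for parameters attached to non-conjugate Cartans; reading off the infinitesimal character and a lowest $K$-type is not by itself enough, and the classical route is through leading exponents of matrix coefficients (or, in Vogan's version, the full lowest-$K$-type triangularity machinery). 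You correctly flag both issues as the heart of the matter, so the proposal is a sound road map of the known proof rather than a gap-free argument, which is an appropriate level of detail for a result the paper itself only cites.
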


For any continued Langlands parameter $\Gamma$, the infinitesimal character of the virtual $(\fg,K)$-module $[I(\Gamma)]$ corresponds, under the Harish-Chandra isomorphism, to the $W$-orbit of $d\gamma$. If $\Gamma$ is final, then $I(\Gamma)$ is tempered if and only if the restriction of $d\gamma$ to $\fh^{-d\theta}$ lies in the imaginary span of $\Delta$ (i.e. $d\gamma|_{\fh^{-d\theta}}$ is imaginary). In this case, $I(\Gamma)$ is irreducible, i.e. $I(\Gamma)=J(\Gamma)$. 

\begin{definition}
A final Langlands parameter $(H,\gamma,\Phi^+)$ is \emph{tempered with real infinitesimal character} if $d\gamma|_{\fh^{-d\theta}} = 0$. Write $\mathcal{P}_L^{t,\RR}(G,K)$ for the set of equivalence classes of such parameters.
\end{definition}

\begin{cor}
The map $\Gamma \mapsto J(\Gamma)$ defines a bijection between $\mathcal{P}_L^{t,\RR}(G,K)$ and isomorphism classes of irreducible tempered $(\fg,K)$-modules with real infinitesimal character.
\end{cor}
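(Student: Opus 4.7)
The plan is to deduce this corollary directly from Theorem 6.1 by identifying within $\mathcal{P}_L(G,K)$ precisely those parameters whose associated irreducible module is both tempered and of real infinitesimal character. The proof is essentially a matter of intersecting two conditions on $d\gamma$.

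First I would assemble the two facts that are already recorded in the paragraph preceding the definition of $\mathcal{P}_L^{t,\RR}(G,K)$: the infinitesimal character of $J(\Gamma)$ is represented by the $W$-orbit of $d\gamma$ under the Harish-Chandra isomorphism, and $I(\Gamma)$ is tempered (and equal to $J(\Gamma)$) if and only if $d\gamma|_{\fh^{-d\theta}}$ lies in the imaginary span of $\Delta$. Combined with the bijection of Theorem 6.1, this reduces the corollary to the following equivalence for final $\Gamma$: $J(\Gamma)$ is tempered with real infinitesimal character if and only if $d\gamma|_{\fh^{-d\theta}}=0$.

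For the equivalence itself I would appeal to the standard real form of $\fh^*$ adapted to $(G,K)$, with respect to which $\fh^{d\theta}$ contributes imaginary values and $\fh^{-d\theta}$ contributes real values (so a weight is real precisely when it is imaginary on $\fh^{d\theta}$ and real on $\fh^{-d\theta}$). If $\Gamma \in \mathcal{P}_L^{t,\RR}(G,K)$, then $d\gamma|_{\fh^{-d\theta}}=0$ trivially lies in the imaginary span of $\Delta$, so $J(\Gamma)=I(\Gamma)$ is tempered, and since $d\gamma$ is then supported on $\fh^{d\theta}$ it automatically represents a real infinitesimal character. Conversely, if $J(\Gamma)$ is tempered with real infinitesimal character, then $d\gamma|_{\fh^{-d\theta}}$ is both imaginary (by temperedness) and real (by the real infinitesimal character assumption), and hence vanishes.

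The only place where care is required is aligning the convention for \emph{real infinitesimal character} with the real structure on $\fh^*$ used to characterize temperedness in the preceding paragraph; once those conventions are matched, the corollary is nothing more than the restriction of the bijection of Theorem 6.1 to the subset $\mathcal{P}_L^{t,\RR}(G,K) \subset \mathcal{P}_L(G,K)$.
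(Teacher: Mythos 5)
Your proposal is correct and matches the argument the paper intends: the corollary is the restriction of the bijection of Theorem 6.1 to those final parameters satisfying the temperedness and real-infinitesimal-character criteria recorded in the preceding paragraph, which for final $\Gamma$ amount exactly to $d\gamma|_{\fh^{-d\theta}}=0$. The paper leaves this unpacking implicit, and your careful matching of the real structure conventions is the only substantive content to supply.
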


Suppose $\mu$ is an irreducible representation of $K$. Choose a maximal torus $T \subset K$ and a positive system $\Phi_c^+$ for $\Delta(K,T)$. Let $\rho_c = \frac{1}{2}\sum \Phi_c^+ \in \mathfrak{t}^*$. Let $\lambda \in \mathfrak{t}^*$ be a highest weight of $\mu$ (if $K$ is disconnected, $\lambda$ need not be unique). The $K$-norm of $\mu$ is defined by the formula
$$|\mu|^2 := \langle \lambda+2\rho_c,\lambda+2\rho_c\rangle$$
It is not hard to see that $|\mu|$ is independent of $\Phi^+_c$ and $\lambda$.

We say that $\mu$ is a \emph{lowest} $K$\emph{-type} in a a $(\fg,K)$-module $X$ if $\mu$ appears in $X$ with nonzero multiplicity and $|\mu|$ is minimal among all irreducible representations of $K$ with this property.

\begin{theorem}[Thm 11.9, \cite{Vogan2007}]\label{thm:LKTs}
The following are true:
\begin{itemize}
    \item[(i)] If $\Gamma \in \mathcal{P}_L^{t,\RR}(G,K)$, then $I(\Gamma)$ contains a unique lowest $K$-type $\mu(\Gamma)$. 
    \item[(ii)]The map $\Gamma \mapsto \mu(\Gamma)$ defines a bijection between $\mathcal{P}_L^{t,\RR}(G,K)$ and $\mathrm{Irr}(K)$.
    \item[(iii)] There is a total order on $\mathcal{P}_L^{t,\RR}(G,K)$ such that the (infinite) square matrix $m(\Gamma,\Gamma')$ defined by the formula
    $$[I(\Gamma)] = \sum_{\Gamma' \in \mathcal{P}_L^{t,\RR}} m(\Gamma,\Gamma')\mu(\Gamma')$$
    is upper triangular with $1$'s along the diagonal. 
    \item[(iv)] In particular, this matrix $m(\Gamma,\Gamma')$ is invertible. Write $M(\Gamma,\Gamma')$ for its inverse (which is also upper triangular).
    \item[(v)] The entries of the matrices $m(\Gamma,\Gamma')$ and $M(\Gamma,\Gamma')$ can be computed by an algorithm.
\end{itemize}
\end{theorem}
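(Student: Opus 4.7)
The plan is to analyze $I(\Gamma)$ for $\Gamma = (H, \gamma, \Phi^+) \in \mathcal{P}_L^{t,\RR}(G,K)$ as a standard tempered module with real infinitesimal character, using cohomological induction and the theory of lowest $K$-types. Write $H = T \cdot A$ with $T = H^{\theta}$ and $A = H^{-\theta}$. Since $d\gamma|_{\mathfrak{a}} = 0$, the parameter essentially lives on $T$, and $I(\Gamma)$ is realized as a real parabolic induction from a (limit of) discrete series on the $\theta$-stable Levi $L = Z_G(\mathfrak{a})$. This structural description is the workhorse for everything that follows.

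For part (i), I would compute the $K$-spectrum of $I(\Gamma)$ via the Blattner formula for limits of discrete series on $L$ combined with Frobenius reciprocity across the real parabolic subgroup. The finality conditions (5) and (6) in Definition \ref{def:Langlands} are designed precisely to ensure that the minimal-norm $K$-type $\mu(\Gamma)$ is unique and occurs with multiplicity one: condition (5) rules out compact simple imaginary roots that would permit a Cayley transform to a smaller parameter carrying a competing minimal $K$-type, and condition (6) rules out real roots satisfying Speh-Vogan parity, which would split $I(\Gamma)$ into tempered summands whose lowest $K$-types would have strictly smaller norm. For part (ii), I would construct the inverse explicitly: given $\mu \in \mathrm{Irr}(K)$ with highest weight $\lambda$ relative to $T \subset K$ and $\Phi_c^+$, Vogan's fine-$K$-type analysis produces a canonical $\theta$-stable Cartan $H \supset T$ and positive imaginary system $\Phi^+$ together with a character $d\gamma$ on $\mathfrak{h}$ vanishing on $\mathfrak{h}^{-d\theta}$ and built from $\lambda$, $2\rho_c$, and appropriate $\rho$-shifts. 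One then verifies that the resulting triple is final and tempered with real infinitesimal character, and that $\mu(\Gamma) = \mu$; injectivity follows because the construction is canonical up to $K$-conjugacy.

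For part (iii), order $\mathcal{P}_L^{t,\RR}(G,K)$ so that $\Gamma' \prec \Gamma$ whenever $|\mu(\Gamma')| < |\mu(\Gamma)|$, breaking ties arbitrarily. Every $K$-type $\mu$ appearing in $I(\Gamma)$ satisfies $|\mu| \geq |\mu(\Gamma)|$ by definition of lowest $K$-type, so $m(\Gamma,\Gamma') \neq 0$ forces $\Gamma' \succeq \Gamma$, yielding upper triangularity. The diagonal entry $m(\Gamma,\Gamma) = 1$ is the multiplicity of $\mu(\Gamma)$ in $I(\Gamma)$, which is $1$ by (i). Invertibility (iv) is then formal for upper-triangular matrices with unit diagonal. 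For (v), the $K$-spectrum of any standard tempered module is computable via the Hecht-Schmid form of Blattner's formula on $L$ followed by the induction step, so each $m(\Gamma,\Gamma')$ is an explicitly computable nonnegative integer; inverting the upper-triangular matrix to obtain $M(\Gamma,\Gamma')$ is then algorithmic (and in fact is implemented in the atlas software).

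The main obstacle is the multiplicity-one assertion at the heart of (i): proving that $\mu(\Gamma)$ occurs in $I(\Gamma)$ exactly once, and that distinct final parameters produce distinct lowest $K$-types. This is where the precise form of the finality conditions is indispensable; without (5) and (6), standard tempered modules may decompose or share minimal $K$-types with parameters of smaller rank, destroying both the uniqueness in (i) and the bijectivity in (ii). Establishing this rigorously requires a careful case analysis by Cayley transforms and inductive reduction on the real rank of $H$, which is essentially the content of Chapter 11 of \cite{Vogan2007}.
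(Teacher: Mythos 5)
This statement is quoted verbatim from Vogan's \emph{Branching to a maximal compact subgroup} (Theorem 11.9 of \cite{Vogan2007}); the paper offers no proof of its own and treats it as a black box, so the only meaningful comparison is with the cited source. Your outline does capture the architecture of Vogan's argument: realizing $I(\Gamma)$ for $\Gamma \in \mathcal{P}_L^{t,\RR}(G,K)$ by real parabolic induction from a limit of discrete series on $Z_G(\mathfrak{a})$, extracting the $K$-spectrum via Blattner--Hecht--Schmid plus Frobenius reciprocity, and ordering parameters by the $K$-norm $|\mu(\Gamma)|$ to get triangularity. The formal deductions of (iii)--(v) from (i)--(ii) are correct, with one small point you gloss over: with ties in $|\mu(\cdot)|$ broken arbitrarily, the inequality $|\mu(\Gamma')| \geq |\mu(\Gamma)|$ alone does not force $\Gamma' \succeq \Gamma$. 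What saves the argument is that if $|\mu(\Gamma')| = |\mu(\Gamma)|$ and $m(\Gamma,\Gamma') \neq 0$, then $\mu(\Gamma')$ is itself a lowest $K$-type of $I(\Gamma)$, so uniqueness in (i) together with injectivity in (ii) forces $\Gamma' = \Gamma$; you should say this explicitly.

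The genuine gap is exactly where you locate it: parts (i) and (ii) are asserted rather than proven, and they carry all of the content of the theorem. Nothing in your sketch actually establishes that the lowest $K$-type of $I(\Gamma)$ is unique and occurs with multiplicity one, nor that the assignment $\mu \mapsto \Gamma$ you describe lands on a \emph{final} parameter and inverts $\Gamma \mapsto \mu(\Gamma)$; deferring this to ``a careful case analysis by Cayley transforms'' is deferring the theorem itself. One substantive inaccuracy in your heuristic: condition (5) of Definition \ref{def:Langlands} (the ``nonzero'' condition) is not about ruling out a competing minimal $K$-type --- if a simple compact imaginary root pairs to zero with $d\gamma$, the limit of discrete series (hence $I(\Gamma)$) \emph{vanishes}, which is why the condition is needed for (i) to even make sense. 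Condition (6) plays the role you describe for real roots, via the Hecht--Schmid identities. As a reconstruction of the strategy behind an imported result your outline is serviceable, but it is not a proof, and within this paper the honest move is the one the author makes: cite \cite{Vogan2007}.
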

The algorithm in (v) is described in \cite{Vogan2007}. 

Now suppose that $G_{\RR}$ is split modulo center. This means that there is a maximal torus $H_s$ in $G$ such that $\Delta(G,H_s)=\Delta_{\RR}(G,H_s)$ (and so $\Delta_{i\RR}(G,H_s) = \emptyset$). Let $\Gamma_0$ be the parameter
$$\Gamma_0 := (H_s,0,\emptyset) \in \mathcal{P}_L^{t,\RR}(G,K)$$
By a result of Kostant (\cite{kostant1969}) there is an identity in $K_a(K)$
$$\CC[\cN_{\theta}]|_K = I(\Gamma_0)|_K$$
So by Theorem \ref{thm:LKTs}, we can write $\CC[\cN_{\theta}]$ as a formal integer sum of the irreducible $K$-representations $\mu(\Gamma)$ (this idea has been implemented in the atlas software). This is quite useful information, but it does \emph{not} give us the grading on $\CC[\cN_{\theta}]$, which is part of what we're after.

In \cite{SchmidVilonen2011}, Schmid and Vilonen define canonical good filtrations on all Harish-Chandra modules of `functorial origin', including all standard and irreducible Harish-Chandra modules. These canonical good filtrations (and their associated gradeds) should be closely related to questions of unitarity. 

The associated graded of a Harish-Chandra module with respect to a good filtration can be regarded as a representation of $\widetilde{K}$ (in fact, as a class in $K_{aa}(\widetilde{K})$). It is conjectured in \cite{SchmidVilonen2011} that
$$\CC[\cN_{\theta}]|_{\widetilde{K}} = \gr I(\Gamma_0)|_{\widetilde{K}}$$
It is also suggested that the Hodge filtration on an \emph{arbitrary} standard module (of an arbitrary group) can be reduced to this case (via cohomological induction and a deformation argument). So computing the class $\CC[\cN_{\theta}]|_{\widetilde{K}}$ in the case when $G_{\RR}$ is split is central to the program of computing Hodge fitlrations. In Theorem \ref{thm:branching}, we will give a formula for $\CC[\cN_{\theta}]|_{\widetilde{K}}$ in terms of the classes $I(\Gamma)q^n$.

\section{A restriction map $K^{\widetilde{G}}(\cN) \to K^{\widetilde{K}}(\cN_{\theta})$}\label{sec:Ktheory}

In this section, we will define a restriction map
$$K^{\widetilde{G}}(\cN) \to K^{\widetilde{K}}(\cN_{\theta})$$
Since $\cN$ and $\cN_{\theta}$ are singular, we cannot proceed directly. Instead, we follow the standard approach outlined (for example) in \cite[Sec 5.3]{Chriss-Ginzburg}: we first regard $\cN$ (resp. $\cN_{\theta}$) as a subvariety of $\fg^*$ (resp. $\fp^*$) and then apply the restriction map $K^{\widetilde{G}}(\fg^*) \to K^{\widetilde{K}}(\fp^*)$ (defined in the usual way, as an alternating sum of Tor functors).

Our first proposition describes the relationship between $K^{\widetilde{G}}(\cN)$, $K^{\widetilde{G}}(\fg^*)$ $K_{aa}(\widetilde{G})$, and $K_f(\widetilde{G})$.

\begin{prop}\label{prop:g*facts}
The following are true:
\begin{itemize}
    \item[(i)] If $\mathcal{E} \in \Coh^{\widetilde{G}}(\cN)$, then $\Gamma(\cN,\mathcal{E})|_{\widetilde{G}} \in \Rep_{aa}(\widetilde{G})$. This defines an exact functor $\Coh^{\widetilde{G}}(\cN) \to \Rep_{aa}(\widetilde{G})$, and hence a group homomorphism
    $$\Gamma(\bullet)|_{\widetilde{G}}: K^{\widetilde{G}}(\cN) \to K_{aa}(\widetilde{G})$$
    \item[(ii)] The homomorphism in (i) is injective.
    \item[(iii)] If $\mathcal{E} \in \Coh^{\widetilde{G}}(\fg^*)$, then $\Gamma(\fg^*,\mathcal{E})|_{\widetilde{G}} \in \Rep_a(\widetilde{G})$. This defines an exact functor $\Coh^{\widetilde{G}}(\fg^*) \to \Rep_a(\widetilde{G})$, and hence a group homomorphism
$$\Gamma(\bullet)|_{\widetilde{G}}: K^{\widetilde{G}}(\fg^*) \to K_a(\widetilde{G})$$
    \item[(iv)] Restriction along $\{0\} \subset \fg^*$ 
    induces an exact functor $\Coh^{\widetilde{G}}(\fg^*) \to \Coh^{\widetilde{G}}(\{0\}) \simeq \Rep_f(\widetilde{G})$, which in turn induces a group isomorphism
    $$|_{\{0\}}: K^{\widetilde{G}}(\fg^*) \xrightarrow{\sim} K_f(\widetilde{G})$$
    \item[(v)] The direct image along the closed embedding $j: \cN \hookrightarrow \fg^*$ induces an exact functor $\Coh^{\widetilde{G}}(\cN) \to \Coh^{\widetilde{G}}(\fg^*)$, and therefore a group homomorphism
    $$j_*: K^{\widetilde{G}}(\cN) \to K^{\widetilde{G}}(\fg^*)$$
    \item[(vi)] If $V \in \Rep_f(\widetilde{G})$, then $V \otimes \CC[\fg^*] \in \Rep_{a}(\widetilde{G})$. This defines an exact functor $\Rep_f(\widetilde{G}) \to \Rep_a(\widetilde{G})$, and hence a group homomorphism
    $$\phi_{\fg^*}: K_f(\widetilde{G}) \to K_a(\widetilde{G})$$
    \item[(vii)] The following diagram commutes:
    \begin{center}
        \begin{tikzcd}
        K_a(\widetilde{G}) & K_f(\widetilde{G}) \ar[l,"\phi_{\fg^*}"]\\
        K_{aa}(\widetilde{G}) \ar[u] & \\
        K^{\widetilde{G}}(\cN) \ar[u,"\Gamma(\bullet)|_{\widetilde{G}}"] \ar[r,"j_*"] & K^{\widetilde{G}}(\fg^*) \ar[uu,swap,"|_{\{0\}}"] \ar[uul,swap,"\Gamma(\bullet)|_{\widetilde{G}}"]
        \end{tikzcd}
    \end{center}
    \item[(viii)] The homomorphism in (v) is injective.
\end{itemize}
\end{prop}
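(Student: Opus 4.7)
The plan is to handle (i)--(vii) in order as structural checks and deduce (viii) from (ii); the real content is (ii), which will require the most work. The two basic inputs underlying everything are: both $\cN$ and $\fg^*$ are affine $\widetilde{G}$-varieties on which $\CC^\times$ contracts to $0$, so every $\widetilde{G}$-equivariant coherent sheaf is $\ZZ_{\geq 0}$-graded with finite-dimensional graded pieces; and every such sheaf admits a $\widetilde{G}$-equivariant surjection from a free module $\CC[\cN]\otimes V$ or $\CC[\fg^*]\otimes V$ with $V\in\Rep_f(\widetilde{G})$.

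For (i), $\CC[\cN]$ is admissible as a $G$-representation by Kostant--Hesselink and has finite-dimensional graded pieces, so $\CC[\cN]\in\Rep_{aa}(\widetilde{G})$; since $\Rep_{aa}$ is closed under tensoring with a finite-dimensional representation and under quotients, the admissibility claim follows, and exactness of $\Gamma$ is automatic on an affine scheme. For (iii) and (vi), $\CC[\fg^*]\otimes V$ has graded pieces $\mathrm{Sym}^n\fg\otimes V$, each finite-dimensional, yielding $\Rep_a(\widetilde{G})$-admissibility (but not $\Rep_{aa}$ in general). Part (v) is immediate since $j$ is a closed embedding. For (iv), I would exhibit the inverse of $|_{\{0\}}$ as $V\mapsto\CC[\fg^*]\otimes V$ and verify using Hilbert's syzygy theorem: any $\mathcal{F}\in\Coh^{\widetilde{G}}(\fg^*)$ admits a finite $\widetilde{G}$-equivariant free resolution $\CC[\fg^*]\otimes V_\bullet\to\mathcal{F}$, and the Koszul resolution of $\CC_0$ over $\CC[\fg^*]$ identifies $|_{\{0\}}[\mathcal{F}]=\sum_i(-1)^iV_i$. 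The diagram in (vii) then drops out: the inner triangle is $\Gamma(\cN,\mathcal{E})=\Gamma(\fg^*,j_*\mathcal{E})$, and the outer square is the identity $[\Gamma(\fg^*,\mathcal{F})]=\sum_i(-1)^i[\CC[\fg^*]\otimes V_i]=\phi_{\fg^*}(|_{\{0\}}[\mathcal{F}])$ coming from the same resolution.

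Given (ii), part (viii) is essentially formal: by (vii), the composite $\phi_{\fg^*}\circ|_{\{0\}}\circ j_*$ equals $\Gamma|_\cN$ followed by the inclusion $K_{aa}(\widetilde{G})\hookrightarrow K_a(\widetilde{G})$, and $\phi_{\fg^*}$ is injective because the lowest $\CC^\times$-weight component of $V\otimes\CC[\fg^*]$ recovers $V$; since $|_{\{0\}}$ is an isomorphism by (iv), injectivity of $\Gamma$ upgrades to injectivity of $j_*$. The main obstacle is therefore (ii) itself. My plan is to use devissage by the finite $G$-orbit stratification of $\cN$: the localization sequence in equivariant $K$-theory builds $K^{\widetilde{G}}(\cN)$ inductively from the pieces $K^{\widetilde{G}}(O)\cong K_f(Z_G(e)\times\CC^\times)$ for each orbit $O=G\cdot e$, and on each open stratum the character map is (derived) algebraic induction $V\mapsto\mathrm{Ind}^{\widetilde{G}}_{Z_G(e)\times\CC^\times}V$, which is injective by Frobenius reciprocity. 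The delicate point is that nilpotent orbits are only quasi-affine, so higher cohomology contributes to the character; the resulting Euler characteristic is nonetheless injective on the stabilizer representation ring, and a five-lemma argument along the orbit filtration propagates injectivity to $K^{\widetilde{G}}(\cN)$.
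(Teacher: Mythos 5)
Your handling of (i) and (iii)--(viii) follows essentially the same route as the paper: admissibility via equivariant surjections from free modules $V\otimes\cO$, contraction to the origin for (iv) (the paper cites \cite[Thm 4.1]{Thomason}; your equivariant syzygy/Koszul argument is the standard proof of that fact), commutativity of (vii) checked on vector bundles, and the formal deduction of (viii). Two small corrections there: the naive functor $\mathcal{F}\mapsto\mathcal{F}|_{\{0\}}$ is only right exact, so in (iv) one must work with the derived restriction $\sum_i(-1)^i[\mathrm{Tor}_i(\CC_0,\mathcal{F})]$, as your Koszul computation implicitly does; and in (viii) you do not need injectivity of $\phi_{\fg^*}$ at all --- by (vii) the composite $\Gamma(\fg^*,\bullet)\circ j_*$ equals the injective map $\Gamma(\cN,\bullet)$ followed by the inclusion $K_{aa}(\widetilde{G})\subset K_a(\widetilde{G})$, which already forces $j_*$ to be injective.

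The genuine gap is (ii). The paper does not prove (ii): it cites \cite[Cor 7.4]{AdamsVogan} and explicitly remarks that the statement is deep, its proof resting on the Langlands classification. Your devissage sketch does not supply a substitute. First, the localization sequence $K^{\widetilde{G}}(Z)\to K^{\widetilde{G}}(\cN)\to K^{\widetilde{G}}(U)\to 0$ is only right exact in $K_0$, so there is no five-lemma available: injectivity of ``the character map'' on each stratum separately does not propagate along the filtration. Second, the map you would need on an open stratum is not the one you describe: $\Gamma(\cN,\mathcal{E})$ is not computed by (derived) sections of $\mathcal{E}|_U$ over the quasi-affine orbit $U$, whose higher cohomology need not even be admissible, and the isotropy group of $e$ in $\widetilde{G}$ is an extension of $\CC^\times$ by $Z_G(e)$ rather than a product. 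Most importantly, the actual content of (ii) is not Frobenius reciprocity on a single orbit but the linear independence in $K_{aa}(\widetilde{G})$ of the virtual characters attached to \emph{different} orbits: distinct orbits contribute overlapping families of irreducible $\widetilde{G}$-representations, and separating their contributions requires a leading-term/triangularity statement (essentially the Lusztig--Vogan bijection, which is what the lowest-$K$-type analysis of standard modules in \cite{AdamsVogan} provides). Without that input your argument does not close; the honest options are to cite \cite[Cor 7.4]{AdamsVogan} as the paper does, or to prove the triangularity directly, which is a substantial theorem in its own right.
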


\begin{proof}\leavevmode
\begin{itemize}
    \item[(i)] This is \cite[(5.4f)]{AdamsVogan} (it is an immediate consequence of the following facts: $\widetilde{G}$ is reductive, $\cN$ is affine, and $\widetilde{G}$ acts on $\cN$ with finitely many orbits).
    \item[(ii)] This is \cite[Cor 7.4]{AdamsVogan}.
    \item[(iii)] Since $\fg^*$ is affine, the functor $\Gamma(\bullet)|_{\widetilde{G}}: \Coh^{\widetilde{G}}(\fg^*) \to \Rep(\widetilde{G})$ is exact. It suffices to show that its image is contained in $\Rep_a(\widetilde{G})$. Let $\mathcal{E} \in \Coh^{\widetilde{G}}(\fg^*)$. Since $\fg^*$ is smooth, there is a $\widetilde{G}$-equivariant vector bundle $\mathcal{V}$ on $\fg^*$ and a surjection $\mathcal{V} \twoheadrightarrow \mathcal{E}$ in $\Coh^{\widetilde{G}}(\fg^*)$. Since $\Gamma(\bullet)|_{\widetilde{G}}$ is exact, we get a surjection $\Gamma(\fg^*,\mathcal{V})|_{\widetilde{G}} \twoheadrightarrow \Gamma(\fg^*,\mathcal{E})|_{\widetilde{G}}$ in $\Rep(\widetilde{G})$. So it suffices to show that $\Gamma(\fg^*,\mathcal{V})|_{\widetilde{G}} \in \Rep_a(\widetilde{G})$. Let $V = \mathcal{V}|_{\{0\}}$. Then $V \in \Rep_f(\widetilde{G})$ and
    $$\mathcal{V} \simeq V \otimes \cO_{\fg^*}$$
    in $\Coh^{\widetilde{G}}(\fg^*)$. So $\Gamma(\fg^*,\mathcal{V})|_{\widetilde{G}} = V \otimes \CC[\fg^*]|_{\widetilde{G}}$. Note that each graded component of $\CC[\fg^*]$, and hence of $V \otimes \CC[\fg^*]$, is finite-dimensional. So $V \otimes \CC[\fg^*]|_{\widetilde{G}} \in \Rep_a(\widetilde{G})$, as required.
    \item[(iv)] This is a well-known fact from equivariant $K$-theory, see \cite[Thm 4.1]{Thomason}. 
    \item[(v)] This follows from the fact that $j$ is closed, and hence affine.
    \item[(vi)] See the proof of (iii).
    \item[(vii)] It suffices to show that the top triangle is commutative (the commutativity of the bottom triangle is obvious). If $\mathcal{V} \simeq V \otimes \cO_{\fg^*} \in \Coh^{\widetilde{G}}(\fg^*)$ is a vector bundle, then
    $$\Gamma(\fg^*,\mathcal{V})|_{\widetilde{G}} \simeq V \otimes \CC[\fg^*]|_{\widetilde{G}} \simeq \mathcal{V}|_{\{0\}} \otimes \CC[\fg^*]|_{\widetilde{G}}$$
    But since $\fg^*$ is smooth, $K^{\widetilde{G}}(\fg^*)$ is spanned by vector bundles. So the upper triangle is commutative.
    \item[(viii)] By (vii), the map 
    \begin{equation}\label{eq:comp1}K^{\widetilde{G}}(\cN) \overset{\Gamma(\bullet)|_{\widetilde{G}}}{\to} K_{aa}(\widetilde{G}) \subset K_a(\widetilde{G})\end{equation}
    coincides with the composition
    $$K^{\widetilde{G}}(\cN) \overset{j_*}{\to} K^{\widetilde{G}}(\fg^*) \overset{\Gamma(\bullet)|_{\widetilde{G}}}{\to} K_a(\widetilde{G}).$$
    By (ii), (\ref{eq:comp1}) is injective. Hence, $j_*:K^{\widetilde{G}}(\cN) \to K^{\widetilde{G}}(\fg^*)$ must be injective as well.
\end{itemize}
\end{proof}

\begin{rmk}
We note that (ii) of Proposition \ref{prop:g*facts} (and hence (viii), which is a consequence) is a very deep assertion---the proof of (ii) in \cite{AdamsVogan} makes essential use of the Langlands classification.
\end{rmk}

\begin{rmk}
It is worth considering what happens if we forget about the $\CC^{\times}$-actions. Arguing exactly as in Proposition \ref{prop:g*facts}, we get a commutative diagram
    \begin{center}
        \begin{tikzcd}
        K(G) & K_f(G) \ar[l,"\phi_{\fg^*}"]\\
        K_a(G) \ar[u] & \\
        K^G(\cN) \ar[u,"\Gamma(\bullet)|_G"] \ar[r,"j_*"] & K^G(\fg^*) \ar[uu,swap,"|_{\{0\}}"] \ar[uul,swap,"\Gamma(\bullet)|_G"]
        \end{tikzcd}
    \end{center}
and $\Gamma: K^G(\cN) \to K_a(G)$ is injective. However, $K(G) = 0$ (indeed, every algebraic $G$-representation $V$ satisfies $V \oplus V^{\infty} \simeq V^{\infty}$, and therefore has image $0$ in $K(G)$). So we cannot deduce that $j_*: K^G(\cN) \to K^G(\fg^*)$ is injective (and in fact, it is not: the skyscraper sheaf at $\{0\}$ (with trivial $G$-action) lies in the kernel of the map $j_*: K^G(\cN) \to K^G(\fg^*)$. So, the $\CC^{\times}$-actions are essential for the proposition above. 
\end{rmk}

If we replace $\widetilde{G}$ with $\widetilde{K}$, $\fg^*$ with $\fp^*$, and so on, we can prove a result which is completely analogous to Proposition \ref{prop:g*facts} (the only change in the proof is that in (iii) we use \cite[Cor 10.9]{AdamsVogan} instead of \cite[Cor 7.4]{AdamsVogan}).   

Let $i: \fp^* \hookrightarrow \fg^*$ be the inclusion. The restriction functor $i^*: \Coh^{\widetilde{G}}(\fg^*) \to \Coh^{\widetilde{K}}(\fp^*)$ is not exact in general (if $\mathcal{E} \in \Coh^{\widetilde{G}}(\fg^*)$, then $i^*\mathcal{E}$ corresponds to the $\CC[\fp^*]$-module $\CC[\fp^*] \otimes_{\CC[\fg^*]} \mathcal{E}$. The functor $\CC[\fp^*] \otimes_{\CC[\fg^*]} (\bullet)$ is only right exact). Write $L_ni^*$ for its higher derived functors (if $\mathcal{E} \in \Coh^{\widetilde{G}}(\fg^*)$, then $L_ni^*\mathcal{E}$ corresponds to the $\CC[\fp^*]$-module $\mathrm{Tor}_n^{\CC[\fg^*]}(\CC[\fp^*],\mathcal{E})$). Since $\fg^*$ is smooth, $L_ni^*\mathcal{E} = 0$ for $n$ very large (see e.g. \cite[Prop 5.1.28]{Chriss-Ginzburg}). So we can define a homomorphism
$$i^*: K^{\widetilde{G}}(\fg^*) \to K^{\widetilde{K}}(\fp^*), \qquad  i^*[\mathcal{E}] = \sum_{n=0}^{\infty} (-1)^n [L_ni^*\mathcal{E}].$$
If $[\mathcal{E}]$ is supported in $\cN$, then $i^*[\mathcal{E}]$ is supported in $\cN_{\theta}$. So $i^*: K^{\widetilde{G}}(\fg^*) \to K^{\widetilde{K}}(\fp^*)$ restricts to a (unique) homomorphism $i^*: K^{\widetilde{G}}(\cN) \to K^{\widetilde{K}}(\cN_{\theta})$
\begin{equation}\label{eq:restriction}
    \begin{tikzcd}
    K^{\widetilde{G}}(\cN) \ar[r,hookrightarrow,"j_*"] \ar[d,"i^*"] & K^{\widetilde{G}}(\fg^*) \ar[d,"i^*"]\\
    K^{\widetilde{K}}(\cN_{\theta}) \ar[r,hookrightarrow,"j_*"] & K^{\widetilde{K}}(\fp^*)
    \end{tikzcd}
\end{equation}
We can compute these restriction maps in terms of $\widetilde{K}$-representations. The key ingredient is a graded Koszul identity in $K_a(\widetilde{K})$. Consider the class
$$[\wedge(\fk)] := \sum_{n=0}^{\infty} (-1)^n [\wedge^n(\fk)] \in K_f(\widetilde{K})$$
Here, as usual, we put $\fk$ in degree $1$
\begin{lemma}\label{lem:Koszul}
There is an identity in $K_a(\widetilde{K})$
$$\CC[\fk^*]|_{\widetilde{K}} \otimes [\wedge(\fk)] = \mathrm{triv}$$
\end{lemma}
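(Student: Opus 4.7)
The plan is to obtain the identity as the Euler characteristic of the (graded) Koszul resolution of $\CC$ as a module over $\Sym(\fk) = \CC[\fk^*]$.

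First I would write down the standard Koszul complex
$$0 \to \Sym(\fk) \otimes \wedge^d \fk \xrightarrow{\delta} \cdots \xrightarrow{\delta} \Sym(\fk) \otimes \wedge^1 \fk \xrightarrow{\delta} \Sym(\fk) \xrightarrow{\varepsilon} \CC \to 0,$$
where $d = \dim \fk$, $\varepsilon$ is the evaluation at $0 \in \fk^*$, and $\delta(f \otimes v_1 \wedge \cdots \wedge v_n) = \sum_{i} (-1)^{i-1} (v_i f) \otimes v_1 \wedge \cdots \widehat{v_i} \cdots \wedge v_n$, with $v \in \fk$ regarded as a degree-$1$ element of $\CC[\fk^*]$. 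This is the classical resolution of $\CC$ by free $\Sym(\fk)$-modules, hence exact.

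Second, I would verify that the complex lives in $\Coh^{\widetilde{K}}(\fk^*)$ and that its global sections lie in $\Rep_a(\widetilde{K})$. The $K$-equivariance of $\delta$ is immediate from its construction out of the identity map $\fk \to \fk$. With the paper's convention that $\fk$ sits in $\CC^{\times}$-degree $1$ both in $\Sym(\fk)$ and in $\wedge(\fk)$, the map $\delta$ takes $\Sym^{m}(\fk) \otimes \wedge^{n}(\fk)$ to $\Sym^{m+1}(\fk) \otimes \wedge^{n-1}(\fk)$ and so preserves the total $\CC^{\times}$-weight $m+n$. Admissibility of each term follows from the observation that $\Sym^{m}(\fk)$ is finite-dimensional and carries the pure $\CC^{\times}$-weight $m$, so each irreducible $\tau q^{k} \in \mathrm{Irr}(\widetilde{K})$ can appear in $\Sym(\fk) \otimes \wedge^{n}(\fk)$ only through the single summand $\Sym^{k-n}(\fk) \otimes \wedge^{n}(\fk)$, which is finite-dimensional.

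Finally, taking the Euler characteristic of the resolution in $K_a(\widetilde{K})$ and using that $K_a(\widetilde{K})$ is a module over $K_f(\widetilde{K})$ via tensor product, I obtain
$$\mathrm{triv} \;=\; [\CC] \;=\; \sum_{n=0}^{d} (-1)^n \bigl[\Sym(\fk) \otimes \wedge^n(\fk)\bigr] \;=\; \bigl[\CC[\fk^*]\bigr|_{\widetilde{K}}\bigr] \cdot [\wedge(\fk)],$$
as desired. There is no serious obstacle here beyond the grading bookkeeping of the preceding paragraph: once the convention placing $\fk$ in degree $1$ on both sides is fixed so that $\delta$ is $\widetilde{K}$-equivariant of degree $0$, the identity is the formal Euler-characteristic consequence of Koszul exactness.
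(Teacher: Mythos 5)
Your proof is correct and follows essentially the same route as the paper: both take the Koszul resolution of $\CC$ over $\CC[\fk^*]$, observe that it is a complex of admissible $\widetilde{K}$-representations with $\widetilde{K}$-equivariant differentials (with $\fk$ in degree $1$ on both the symmetric and exterior sides so that $\delta$ has $\CC^{\times}$-weight $0$), and then apply the Euler--Poincar\'e principle. Your version simply spells out the differential and the admissibility check in more detail than the paper does.
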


\begin{proof}
Consider the Koszul resolution of the trivial $\CC[\fk^*]$-module
\begin{equation}\label{eq:Koszul}0 \to \CC[\fk^*] \otimes \wedge^{\dim(\fk)}(\fk) \to ... \to  \CC[\fk^*] \otimes \wedge^1(\fk)\to  \CC[\fk^*] \otimes \wedge^0(\fk)\to \CC \to 0\end{equation}
We can regard each term as a representation of $\widetilde{K}$, and it is easy to check that the differentials are $\widetilde{K}$-equivariant. Restricting to $\widetilde{K}$ we get an exact sequence in $\Rep_a(\widetilde{K})$
\begin{equation*}0 \to \CC[\fk^*]|_{\widetilde{K}} \otimes \wedge^{\dim(\fk)}(\fk) \to ... \to  \CC[\fk^*]|_{\widetilde{K}} \otimes \wedge^1(\fk)\to  \CC[\fk^*]|_{\widetilde{K}} \otimes \wedge^0(\fk)\to \mathrm{triv} \to 0\end{equation*}
Now the required identity follows from the Euler-Poincare principle
$$\CC[\fk^*]|_{\widetilde{K}} \otimes [\wedge(\fk)] = \sum_n (-1)^n \CC[\fk^*]|_{\widetilde{K}} \otimes [\wedge^n(\fk)]  = \mathrm{triv}$$
\end{proof}

If $V \in \Rep_a(\widetilde{G})$, then $V|_{\widetilde{K}} \in \Rep_a(\widetilde{K})$. This  defines an exact functor $\Rep_a(\widetilde{G}) \to \Rep_a(\widetilde{K})$, and hence a group homomorphism
$$|_{\widetilde{K}}: K_a(\widetilde{G}) \to K_a(\widetilde{K})$$
Tensoring with the class $[\wedge(\fk)] \in K_f(\widetilde{K})$, we obtain a further homomorphism
$$r: K_a(\widetilde{G}) \to K_a(\widetilde{K}), \qquad r[V] = [V]|_{\widetilde{K}} \otimes [\wedge(\fk)]$$
\begin{lemma}\label{lem:diagram}
The following diagram is commutative
\begin{center}
    \begin{tikzcd}
    K_a(\widetilde{G}) \ar[d,"r"] & K_f(\widetilde{G}) \ar[d,"|_{\widetilde{K}}"] \ar[l,"\phi_{\fg^*}"]\\
    K_a(\widetilde{K}) & K_f(\widetilde{K}) \ar[l,"\phi_{\fp^*}"]
    \end{tikzcd}
\end{center}
\end{lemma}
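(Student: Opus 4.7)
The plan is to unpack what each of the two compositions $r \circ \phi_{\fg^*}$ and $\phi_{\fp^*} \circ |_{\widetilde{K}}$ does to an arbitrary class $[V] \in K_f(\widetilde{G})$ and reduce the commutativity to a single identity in $K_a(\widetilde{K})$, which will then follow from the Koszul computation in Lemma \ref{lem:Koszul}. Explicitly, both compositions are $K_f(\widetilde{G})$-linear (via the obvious restriction map $K_f(\widetilde{G}) \to K_f(\widetilde{K})$ and the module structures reviewed in the notation section), so by extending scalars it suffices to handle the class of the trivial one-dimensional representation. Thus the whole statement will reduce to showing
\[ \CC[\fg^*]|_{\widetilde{K}} \otimes [\wedge(\fk)] \;=\; \CC[\fp^*] \qquad \text{in } K_a(\widetilde{K}). \]

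First I would verify that both compositions are well-defined $K_f(\widetilde{G})$-linear maps. Since each graded piece of $\CC[\fg^*]$ is finite-dimensional, tensoring a finite-dimensional $\widetilde{G}$-representation with $\CC[\fg^*]$ stays in $\Rep_a$; similarly on the $\fp^*$ side. Restriction to $\widetilde{K}$ and tensoring with the finite class $[\wedge(\fk)]$ preserve $K_a(\widetilde{K})$ because $K_a(\widetilde{K})$ is a module over $K_f(\widetilde{K})$. So the two composites genuinely land in $K_a(\widetilde{K})$, and both are given by $[V] \mapsto [V]|_{\widetilde{K}} \otimes (\text{something depending only on the ambient vector space})$.

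Next I would use the Cartan decomposition (\ref{eq:Cartan}), which gives a $\widetilde{K}$-equivariant isomorphism $\fg^* \simeq \fk^* \oplus \fp^*$ (as affine spaces with $\widetilde{K}$-action, where $\CC^\times$ acts by scaling on both summands). This produces a $\widetilde{K}$-equivariant algebra isomorphism $\CC[\fg^*]|_{\widetilde{K}} \simeq \CC[\fk^*]|_{\widetilde{K}} \otimes \CC[\fp^*]|_{\widetilde{K}}$. Substituting into the identity above and rearranging using the $K_f(\widetilde{K})$-module structure of $K_a(\widetilde{K})$,
\[ \CC[\fg^*]|_{\widetilde{K}} \otimes [\wedge(\fk)] \;=\; \bigl(\CC[\fk^*]|_{\widetilde{K}} \otimes [\wedge(\fk)]\bigr) \otimes \CC[\fp^*]|_{\widetilde{K}}. \]
By Lemma \ref{lem:Koszul} the first factor on the right equals the trivial class, leaving $\CC[\fp^*]|_{\widetilde{K}}$, which is what we wanted.

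The only step that requires a little care is checking that all three identities --- the Cartan-decomposition isomorphism, the Koszul identity, and the module-structure manipulation --- are compatible with the $\CC^\times$-grading (so that they hold in $K_a(\widetilde{K})$ and not merely in $K_a(K)$). The Cartan decomposition is homogeneous of degree one because $\CC^\times$ scales all of $\fg^*$ uniformly, the Koszul complex used to prove Lemma \ref{lem:Koszul} is graded by construction (with $\fk$ placed in degree $1$), and the rearrangement is formal. With those compatibilities verified, the diagram commutes.
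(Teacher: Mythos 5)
Your proof is correct and follows essentially the same route as the paper: both arguments unfold $\phi_{\fg^*}(V) = V \otimes \CC[\fg^*]$, factor $\CC[\fg^*]|_{\widetilde{K}} \simeq \CC[\fk^*]|_{\widetilde{K}} \otimes \CC[\fp^*]|_{\widetilde{K}}$ via the Cartan decomposition, and cancel the $\CC[\fk^*]$ factor against $[\wedge(\fk)]$ using Lemma \ref{lem:Koszul}. Your reduction to the trivial representation by $K_f(\widetilde{G})$-linearity is just a repackaging of the paper's direct computation with a general $V$, so nothing of substance differs.
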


\begin{proof}
Let $V \in K_f(\widetilde{G})$. Then by Lemma \ref{lem:Koszul} we have
\begin{align*}
\phi_{\fg^*}(V)|_{\widetilde{K}} \otimes [\wedge(\fk)] &= (V \otimes \CC[\fg^*])|_{\widetilde{K}} \otimes [\wedge(\fk)]\\
&= V|_{\widetilde{K}} \otimes \CC[\fp^*]|_{\widetilde{K}} \otimes \CC[\fk^*]|_{\widetilde{K}} \otimes [\wedge(\fk)]\\
&= V|_{\widetilde{K}} \otimes \CC[\fp^*]|_{\widetilde{K}}\\
&= \phi_{\fp^*}(V|_{\widetilde{K}})
\end{align*}
as desired.
\end{proof}

\begin{lemma}\label{lem:diagram2}
The following diagram is commutative
\begin{center}
    \begin{tikzcd}
    K^{\widetilde{G}}(\fg^*) \ar[d,"i^*"] \ar[r,"|_{\{0\}}"] & K_f(\widetilde{G}) \ar[d,"|_{\widetilde{K}}"]\\
    K^{\widetilde{K}}(\fp^*) \ar[r,"|_{\{0\}}"] & K_f(\widetilde{K})
    \end{tikzcd}
\end{center}
\end{lemma}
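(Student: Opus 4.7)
The plan is to reduce to a check on a generating set of $K^{\widetilde{G}}(\fg^*)$, for which both routes of the square can be written down explicitly.

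First, I would use the fact that $\fg^*$ is smooth (and affine) together with Proposition \ref{prop:g*facts}(iv): the restriction $|_{\{0\}}: K^{\widetilde{G}}(\fg^*) \xrightarrow{\sim} K_f(\widetilde{G})$ is an isomorphism, whose inverse sends $V \in \Rep_f(\widetilde{G})$ to the class of the trivial equivariant vector bundle $V \otimes \cO_{\fg^*}$. In particular, $K^{\widetilde{G}}(\fg^*)$ is spanned, as an abelian group, by classes of the form $[V \otimes \cO_{\fg^*}]$ with $V \in \Rep_f(\widetilde{G})$. Since all four maps in the square are group homomorphisms, it suffices to verify commutativity on such generators.

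Next, I would compute both sides on $[V \otimes \cO_{\fg^*}]$. Along the top-right route, $(V \otimes \cO_{\fg^*})|_{\{0\}} = V$, and then restriction to $\widetilde{K}$ gives $V|_{\widetilde{K}}$. Along the left-bottom route, one has $i^*(V \otimes \cO_{\fg^*}) = V|_{\widetilde{K}} \otimes \cO_{\fp^*}$: the sheaf $V \otimes \cO_{\fg^*}$ corresponds to the free $\CC[\fg^*]$-module $V \otimes \CC[\fg^*]$, for which the higher Tors $\mathrm{Tor}_n^{\CC[\fg^*]}(\CC[\fp^*], V \otimes \CC[\fg^*])$ vanish for $n \geq 1$, so $L_ni^*$ kills these classes and only $L_0i^* = i^*$ contributes. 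Restricting the resulting trivial bundle $V|_{\widetilde{K}} \otimes \cO_{\fp^*}$ to $\{0\}$ gives $V|_{\widetilde{K}}$ as well. The two values coincide, so the diagram commutes on generators, and therefore everywhere.

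There is no real obstacle here; this is essentially a bookkeeping argument. The only point that requires a moment's thought is why the derived pullback $i^*$ reduces to the ordinary pullback on the generators $V \otimes \cO_{\fg^*}$, and this is immediate from the freeness of these sheaves over $\CC[\fg^*]$, which in turn is the whole reason we are allowed to work with equivariant vector bundles on the smooth ambient spaces $\fg^*$ and $\fp^*$ rather than on the singular varieties $\cN$ and $\cN_\theta$.
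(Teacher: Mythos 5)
Your proposal is correct and matches the paper's own argument: both reduce to the generators $[V\otimes\cO_{\fg^*}]$ spanning $K^{\widetilde{G}}(\fg^*)$ (via smoothness of $\fg^*$ and Proposition \ref{prop:g*facts}(iv)), observe that higher Tor groups vanish on these free modules so $i^*$ is the ordinary pullback, and check that both routes yield $V|_{\widetilde{K}}$. No substantive difference.
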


\begin{proof}
If $\mathcal{V} \simeq V \otimes \cO_{\fg^*} \in \Coh^{\widetilde{G}}(\fg^*)$ is a vector bundle, then $L_ni^*\mathcal{V} = 0$ for $n>0$ ($\mathcal{V}$ corresponds to a flat, and hence projective, $\CC[\fg^*]$-module, so all higher Tor groups vanish). Consequently
\begin{align*}
(i^*[\mathcal{V}])|_{\{0\}} &= [i^*\mathcal{V}]|_{\{0\}}\\
&= (V|_{\widetilde{K}} \otimes \cO_{\fp^*})|_{\{0\}}\\
&= V|_{\widetilde{K}}\\
&= ([\mathcal{V}]|_{\{0\}})|_{\widetilde{K}}
\end{align*}
So the diagram commutes on vector bundles. But since $\fg^*$ is smooth, $K^{\widetilde{G}}(\fg^*)$ is spanned by vector bundles. This completes the proof.
\end{proof}

The next result gives a method for computing $i^*: K^{\widetilde{G}}(\cN) \to K^{\widetilde{K}}(\cN_{\theta})$ on the level of $\widetilde{K}$-representations.

\begin{cor}\label{cor:main}
The following diagram is commutative
\begin{center}
\begin{tikzcd}
 & K_a(\widetilde{G}) \ar[pos=.3,"r"]{dd} & &   K_f(\widetilde{G}) \ar[ll,hookrightarrow,"\phi_{\fg^*}"]\ar["|_{\widetilde{K}}"]{dd} \\
 
    K^{\widetilde{G}}(\cN)  \ar[crossing over,hookrightarrow,pos=.7,"j_*"]{rr} \ar[pos=.3,"i^*"]{dd} \ar[ur,hookrightarrow,"\Gamma"] & & K^{\widetilde{G}}(\fg^*)  \ar[ur,swap,pos=.3,"|_{\{0\}}"]\\
    
      & K_a(\widetilde{K}) & &  K_f(\widetilde{K})  \ar[ll,hookrightarrow,pos=.7,"\phi_{\fp^*}"] \\
    K^{\widetilde{K}}(\cN_{\theta}) \ar[ur,hookrightarrow,"\Gamma"] \ar[rr,"j_*",hookrightarrow] && K^{\widetilde{K}}(\fp^*) \ar[from=uu,crossing over,pos=.3,"i^*"] \ar[ur,swap,pos=.3,"|_{\{0\}}"]
\end{tikzcd}
\end{center}
In particular, for every $[\mathcal{E}] \in K^{\widetilde{G}}(\cN)$, the restriction $i^*[\mathcal{E}]$ is uniquely determined by the following identity in $K_a(\widetilde{K})$
\begin{equation}\label{eq:mainequation}\Gamma(i^*[\mathcal{E}])|_{\widetilde{K}} = \Gamma([\mathcal{E}])|_{\widetilde{K}} \otimes [\wedge(\fk)]\end{equation}
\end{cor}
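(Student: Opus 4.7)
The plan is to view the three-dimensional diagram as a rectangular prism whose six faces (several of which are triangular or trapezoidal rather than strictly square) correspond to identities that are already established or tautological. The back face, containing the $\widetilde{G}$-theoretic data, is precisely Proposition \ref{prop:g*facts}(vii); the front face, containing the analogous $\widetilde{K}$-theoretic data, is the verbatim $K$-analog alluded to in the paragraph immediately following the proof of Proposition \ref{prop:g*facts}; the top face is Lemma \ref{lem:diagram}; the right face, built from the two $|_{\{0\}}$ diagonals and the two $i^*$ verticals, is Lemma \ref{lem:diagram2}; and the bottom square commutes tautologically, as the vertical map $i^*\colon K^{\widetilde{G}}(\cN)\to K^{\widetilde{K}}(\cN_{\theta})$ was \emph{defined} as the restriction of $i^*\colon K^{\widetilde{G}}(\fg^*)\to K^{\widetilde{K}}(\fp^*)$ along the closed embeddings $j_*$, see (\ref{eq:restriction}).

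The only face not immediately accounted for is the left face, whose commutativity is exactly the identity (\ref{eq:mainequation}). I would verify it by a short diagram chase. Given $[\mathcal{E}]\in K^{\widetilde{G}}(\cN)$, I travel to $K_a(\widetilde{K})$ along two routes: first, down the left edge (applying $i^*$) and then up along the front diagonal (applying $\Gamma(\bullet)|_{\widetilde{K}}$), producing $\Gamma(i^*[\mathcal{E}])|_{\widetilde{K}}$; and second, up along the back diagonal (applying $\Gamma(\bullet)|_{\widetilde{G}}$) and then across the top edge (applying $r$), producing $r(\Gamma([\mathcal{E}])|_{\widetilde{G}})=\Gamma([\mathcal{E}])|_{\widetilde{K}}\otimes[\wedge(\fk)]$. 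Because the other five faces are known to commute, the two end results must agree in $K_a(\widetilde{K})$, which is precisely (\ref{eq:mainequation}).

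For the ``in particular'' assertion, I would invoke the $\widetilde{K}$-analog of Proposition \ref{prop:g*facts}(ii): the composition $\Gamma(\bullet)|_{\widetilde{K}}\colon K^{\widetilde{K}}(\cN_{\theta})\to K_{aa}(\widetilde{K})\subset K_a(\widetilde{K})$ is injective. Consequently, the class $i^*[\mathcal{E}]\in K^{\widetilde{K}}(\cN_{\theta})$ is pinned down by its image under $\Gamma$, and that image is determined by the right-hand side of (\ref{eq:mainequation}), which depends only on $[\mathcal{E}]$.

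There is no new geometric or algebraic input beyond the Koszul computation of Lemma \ref{lem:Koszul} (which powers Lemma \ref{lem:diagram}) and the injectivity results of Proposition \ref{prop:g*facts} and its $\widetilde{K}$-analog. The main obstacle is therefore just bookkeeping: keeping the direction of each arrow straight, and making sure that the $\Gamma$-diagonals on the left and the $|_{\{0\}}$-diagonals on the right are composed through the correct intermediate vertices when chasing around the cube.
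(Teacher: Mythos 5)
Your proposal is correct and follows essentially the same route as the paper: five of the six faces of the cube are exactly Proposition \ref{prop:g*facts}(vii), its $\widetilde{K}$-analog, Lemmas \ref{lem:diagram} and \ref{lem:diagram2}, and the defining square (\ref{eq:restriction}), and the remaining face (hence (\ref{eq:mainequation})) follows by the formal chase you describe. Your explicit appeal to the $\widetilde{K}$-analog of Proposition \ref{prop:g*facts}(ii) for the ``uniquely determined'' clause is a correct and welcome addition that the paper leaves implicit in the hooked arrows.
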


\begin{proof}
The front face is commutative by the definition of $i^*: K^{\widetilde{G}}(\cN) \to K^{\widetilde{K}}(\cN_{\theta})$, see (\ref{eq:restriction}). The right face is commutative by Lemma \ref{lem:diagram2}. The back face is commutative by Lemma \ref{lem:diagram}. The top face is commutative by Proposition \ref{prop:g*facts}. The bottom face is commutative by its analog for $\widetilde{K}$. The commutativity of the left face follows as a formal consequence of the commutativity of the others.  
\end{proof}

\begin{rmk}\label{rmk:GGR}
There are surjective `forgetful' maps
$$K^{\widetilde{G}}(\cN) \twoheadrightarrow K^G(\cN), \qquad K^{\widetilde{K}}(\cN_{\theta}) \twoheadrightarrow K^K(\cN_{\theta})$$
It is not hard to show that the restriction map $i^*: K^{\widetilde{G}}(\cN) \to K^{\widetilde{K}}(\cN_{\theta})$ descends to a (necessarily unique) homomorphism
$$i^*: K^G(\cN) \to K^K(\cN_{\theta})$$
Since we will not use this fact, we will not prove it here. Write  $K\mathcal{M}(G_{\RR})$ for the Grothendieck group of finite-length admissible $G_{\RR}$-representations. Similarly, write $K\mathcal{M}(G)$ (here $G$ is regarded as a real reductive group by restriction of scalars). There are `associated graded' maps
$$\gr: K\mathcal{M}(G) \to K^G(\cN), \qquad \gr: K\mathcal{M}(G_{\RR}) \to K^K(\cN_{\theta})$$
(see \cite{Vogan1991} for definitions). An intriguing question, which we will not pursue in this paper, is whether there is a natural homomorphism $K\mathcal{M}(G) \overset{?}{\to} K\mathcal{M}(G_{\RR})$ such that the following diagram commutes
\begin{center}
    \begin{tikzcd}
    K\mathcal{M}(G) \ar[r,"?"] \ar[d,"\gr"] & K\mathcal{M}(G_{\RR}) \ar[d,"\gr"]\\
    K^G(\cN) \ar[r,"i^*"]& K^K(\cN_{\theta})
    \end{tikzcd}
\end{center}
%
%Langlands' principle of functoriality suggests that a map of this form should come from an algebraic homomorphism from the $L$-group of $G$ (regarded as a group over $\RR$) to the $L$-group of $G_{\RR}$. Constructing such a homomrophism, and describing the map on representations, seems like a very concrete problem, but we do not offer a solution.
\end{rmk}

\section{Regular functions on $\cN$}\label{sec:complexcase}

In this section, we will recall a (well-known) formula for $\CC[\cN]$ as a representation of $\widetilde{G}$. Choose a maximal torus $H \subset G$ and a Borel subgroup $B \subset G$ containing $H$. Let $\Lambda \subset \mathfrak{h}^*$ denote the weight lattice, $\Phi^+ \subset \Lambda$ the positive roots, and $\Lambda^+ \subset \Lambda$ the dominant weights. Then
$$\mathrm{Irr}(G) = \{\tau_{\lambda} \mid \lambda \in \Lambda^+\},$$
where $\tau_{\lambda}$ is the irreducible representation of $G$ with highest weight $\lambda$. Recall \emph{Kostant's partition function}
$$\mathcal{P}: \Lambda \to \ZZ, \qquad \mathcal{P}(\lambda) = \#\{m: \Phi^+ \to \ZZ \mid \lambda = \sum_{\alpha \in \Phi^+} m(\alpha)\alpha\}$$
Define
$$\mathcal{M}: \Lambda^+ \times \Lambda \to \ZZ, \qquad \mathcal{M}(\lambda,\mu) = \sum_{w \in W} (-1)^{\ell(w)} \mathcal{P}(w(\lambda+\rho)-(\mu+\rho))$$
where $\ell: W \to \ZZ_{\geq 0}$ is the length function and $\rho = \frac{1}{2}\sum \Phi^+$. For each $\lambda \in \Lambda^+$, there is an identity in $K_a(H)$
\begin{equation}\label{eq:Weyl}\tau_{\lambda}|_H = \sum_{\mu \in \Lambda} \mathcal{M}(\lambda,\mu)e^{\mu}\end{equation}
This is a version of the Weyl character formula.

Lusztig has introduced $q$-analogs of both $\mathcal{P}$ and $\mathcal{M}$ (\cite{Lusztig1983}). The $q$-analog of $\mathcal{P}$ is defined by the formula
$$\mathcal{P}_q: \Lambda \to \ZZ[q], \qquad \mathcal{P}_q(\lambda) = \sum_{n=0}^{\infty} \mathcal{P}_q^n(\lambda)q^n$$
where
$$\mathcal{P}_q^n(\lambda) = \#\{m: \Phi^+ \to \ZZ \mid \lambda = \sum_{\alpha \in \Phi^+} m(\alpha)\alpha \ \text{ and } \ n=\sum_{\alpha \in \Phi^+} m(\alpha)\}$$
The $q$-analog of $\mathcal{M}$ is
$$\mathcal{M}_q: \Lambda^+ \times \Lambda \to \ZZ[q], \qquad \mathcal{M}_q(\lambda,\mu) =  \sum_{w \in W}(-1)^{\epsilon(w)} \mathcal{P}_q(w(\lambda+\rho)-(\mu+\rho))$$
The following can be extracted from \cite{Mcgovern1989}.

\begin{prop}\label{prop:complexcase}
There is an identity in $K_a(\widetilde{G})$
$$\CC[\cN]|_{\widetilde{G}} = \sum_{\lambda \in \Lambda^+} \tau_{\lambda}\mathcal{M}_q(\lambda,0)$$
\end{prop}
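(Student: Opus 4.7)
The identity is the Hesselink--Peterson formula for the graded $G$-character of $\CC[\cN]$. My plan is to prove it by combining the Springer resolution with a Weyl-character / Bott-style computation for $G$-induction from $B$.

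First, I would invoke the Springer resolution $\mu: \widetilde{\cN} := T^*(G/B) \to \cN$, a $\widetilde{G}$-equivariant resolution of singularities in which $\CC^\times$ scales cotangent fibers (and hence matches the polynomial grading on $\CC[\cN]$ induced by the grading of $\CC[\fg^*]$). The substantive geometric input is the cohomology vanishing
\[ \mu_*\cO_{\widetilde{\cN}} = \cO_\cN, \qquad R^i\mu_*\cO_{\widetilde{\cN}} = 0 \text{ for } i > 0, \]
expressing that $\cN$ has rational (in fact symplectic) singularities. One can appeal to Grauert--Riemenschneider combined with the holomorphic symplectic form on $\widetilde{\cN}$, or to Hesselink's original Bruhat-stratification argument. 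This gives, as graded $\widetilde{G}$-modules,
\[ \CC[\cN] \;\simeq\; \Gamma(\widetilde{\cN}, \cO_{\widetilde{\cN}}). \]

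Second, I would rewrite the right-hand side via the vector-bundle description $\widetilde{\cN} \simeq G \times^B (\fg/\fb)^*$ together with the standard identification $\Gamma(T^*(G/B), \cO) = \Gamma(G/B, \mathrm{Sym}\, T_{G/B})$. Since $T_{G/B} \simeq G \times^B (\fg/\fb)$, this yields an isomorphism of graded $\widetilde{G}$-modules
\[ \CC[\cN]\big|_{\widetilde{G}} \;\simeq\; \Ind_B^G \mathrm{Sym}(\fg/\fb), \]
with $\fg/\fb$ placed in $\CC^\times$-degree $1$.

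Third, I would extract the multiplicity of $\tau_\lambda$ by applying the Weyl character formula to this induced representation. The $T$-character of $\mathrm{Sym}(\fg/\fb)$ is, up to the standard duality/Borel conventions, the Kostant $q$-partition generating series, and for any $B$-module $M$ with finite weight multiplicities, Kempf vanishing on $G/B$ combined with Weyl's formula gives a signed-sum expression
\[ [\Ind_B^G M : \tau_\lambda] \;=\; \sum_{w \in W} (-1)^{\ell(w)} \bigl[M : e^{w(\lambda+\rho) - \rho}\bigr]. \]
Applying this weight-by-weight while retaining the $\CC^\times$-grading yields
\[ [\CC[\cN] : \tau_\lambda]_q \;=\; \sum_{w \in W} (-1)^{\ell(w)} \mathcal{P}_q\bigl(w(\lambda+\rho) - \rho\bigr) \;=\; \mathcal{M}_q(\lambda, 0), \]
as required.

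The main obstacle is the geometric vanishing input $R^i\mu_*\cO_{\widetilde{\cN}} = 0$ for $i > 0$; once this is granted, everything reduces to Weyl-group character bookkeeping, with careful handling of signs and the choice of positive versus negative Borel being the only real subtlety. A Kostant-theoretic alternative would bypass the Springer resolution by using that $\CC[\fg^*]$ is a free $\CC[\fg^*]^G$-module with $\CC[\cN]$ as graded fiber, then dividing the known $\widetilde{G}$-character of $\mathrm{Sym}(\fg)$ by that of the Chevalley generators to extract the Hesselink--Peterson generating function; but this route ultimately involves the same Weyl-group combinatorics.
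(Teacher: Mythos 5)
Your proposal is correct and follows essentially the same route as the paper: Springer resolution, Grauert--Riemenschneider vanishing via the triviality of the canonical bundle of $T^*(G/B)$, identification of the pushforward to $G/B$ with the bundle $G \times_B S(\fg/\fb)$, and a Borel--Weil--Bott / Weyl character computation yielding $\mathcal{M}_q(\lambda,0)$. The only cosmetic difference is that the paper phrases the final step as an alternating sum of cohomology groups in the Grothendieck group rather than asserting higher-cohomology vanishing for the induced module, but this does not change the substance of the argument.
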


\begin{proof}[Sketch of proof]
Consider the Springer resolution $\eta:T^*(G/B) \to \cN$. Since $T^*(G/B)$ is symplectic, there is an identification (of $\widetilde{G}$-equivariant sheaves)
$$\cO_{T^*(G/B)} \simeq \omega_{T^*(G/B)}$$
where $\omega_{T^*(G/B)}$ is the canonical sheaf on $T^*(G/B)$. So by the theorem of Grauert and Riemenschneider (\cite{GrauertRiemenschneider})
$$R^i\eta_*\mathcal{O}_{T^*(G/B)} =0, \qquad \forall i >0$$
On the other hand, $R^0\eta_*\cO_{T^*(G/B)} \simeq \cO_{\cN}$. Using the Leray spectral sequence (and the fact that $\cN$ is affine), we get an identity in $K_a(\widetilde{G})$
\begin{equation}\label{eq:CN}\CC[\cN]|_{\widetilde{G}} = \sum_i (-1)^i H^i(T^*(G/B),\cO_{T^*(G/B)})\end{equation}
If $p: T^*(G/B) \to G/B$ is the projection, then $p_*\cO_{T^*(G/B)}$ is identified (as a $\widetilde{G}$-equivariant sheaf) with (the sheaf of local sections of) the $\widetilde{G}$-equivariant vector bundle $G \times_B S(\mathfrak{g}/\mathfrak{b})$. Since $p$ is affine, the direct image functor $p_*$ preserves cohomology, i.e.
\begin{equation}\label{eq:applyp}H^i(T^*(G/B),\cO_{T^*(G/B)}) \simeq H^i(G/B,p_*\cO_{T^*(G/B)}), \qquad \forall i \geq 0\end{equation}
Combining (\ref{eq:CN}) and (\ref{eq:applyp}), we get a further identity in $K_a(\widetilde{G})$
$$\CC[\cN]|_{\widetilde{G}} = \sum_i (-1)^i H^i(G/B, G \times_B S(\mathfrak{g}/\mathfrak{b}))$$
The right hand side can be computed using Borel-Weil-Bott. The result follows.
\end{proof}

\begin{comment}
Combining Proposition \ref{prop:complexcase} with the Weyl character formula (\ref{eq:Weyl}) we obtain the following description of $\CC[\cN]$ as a representation of $\widetilde{H} = H \times \CC^{\times}$.

\begin{cor}
There is an identity in $K_a(\widetilde{H})$
%
$$\CC[\cN]|_{\widetilde{H}} = \sum_{\lambda \in \Lambda^+}\sum_{\mu \in \Lambda} \mathcal{M}(\lambda,\mu)\mathcal{M}_q(\lambda,0)e^{\mu}$$
\end{cor}
\end{comment}
\section{Some commutative algebra}\label{sec:commalg}

Using the results of Section \ref{sec:Ktheory}, we get a well-defined class $i^*[\cO_{\cN}] \in K^{\widetilde{K}}(\cN_{\theta})$, which can be regarded as the restriction of $\cO_{\cN}$ to $\fp^*$. However, it is not at all clear that $i^*[\cO_{\cN}] = [\cO_{\cN_{\theta}}]$. For groups split modulo center, we will see that this equality always holds, but the proof will require some commutative algebra.

\begin{lemma}\label{lem:Tor}
Let $R$ be a Noetherian ring and let $M$ be a finitely-generated $R$-module. Suppose $x_1,...,x_m \in R$ is an $R$-regular sequence which is also $M$-regular. Consider the ideal $I = (x_1,...,x_m) \subset R$. Then
$$\mathrm{Tor}^R_n(R/I,M) = 0, \qquad n >0$$
\end{lemma}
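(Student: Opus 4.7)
My plan is to prove this via the Koszul complex, which is the standard tool for computing $\mathrm{Tor}$ groups against quotients by regular sequences.

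First, I would form the Koszul complex $K_\bullet(x_1,\ldots,x_m;R)$, a complex of finitely generated free $R$-modules with $K_n = \wedge^n R^m$ and the usual Koszul differential built from the $x_i$. The first key input is the classical fact that when $x_1,\ldots,x_m$ is an $R$-regular sequence, this Koszul complex is acyclic in positive degrees and has $H_0 = R/I$, so it furnishes a finite free resolution of $R/I$ over $R$ (see e.g.\ Matsumura or Eisenbud).

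Second, I would use this resolution to compute the Tor groups:
\begin{equation*}
\mathrm{Tor}^R_n(R/I,M) \;=\; H_n\bigl(K_\bullet(x_1,\ldots,x_m;R)\otimes_R M\bigr) \;=\; H_n\bigl(K_\bullet(x_1,\ldots,x_m;M)\bigr),
\end{equation*}
where the last equality is just the observation that tensoring the Koszul complex on $R$ with $M$ produces the Koszul complex on $M$ with the same sequence $x_1,\ldots,x_m$. The second key input is the dual statement: when $x_1,\ldots,x_m$ is an $M$-regular sequence, the Koszul complex $K_\bullet(x_1,\ldots,x_m;M)$ is acyclic in positive degrees (with $H_0 = M/IM$). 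This immediately gives $\mathrm{Tor}^R_n(R/I,M) = 0$ for $n>0$, as required.

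Both ingredients are standard and I would simply cite them rather than redo the inductive proofs; there is essentially no obstacle here beyond stringing the two facts together. If one wanted to be self-contained, the only slightly delicate point is the acyclicity of $K_\bullet(\underline x;M)$ for an $M$-regular sequence, which is proved by induction on $m$ using the short exact sequence $0 \to M \xrightarrow{x_m} M \to M/x_m M \to 0$ and the long exact sequence associated to the mapping-cone description $K_\bullet(x_1,\ldots,x_m;M) \cong \mathrm{Cone}(K_\bullet(x_1,\ldots,x_{m-1};M)\xrightarrow{x_m} K_\bullet(x_1,\ldots,x_{m-1};M))$; the Noetherian/finitely-generated hypotheses are not actually needed here but are harmless.
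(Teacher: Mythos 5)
Your proposal is correct and follows exactly the paper's own argument: use $R$-regularity to make the Koszul complex a free resolution of $R/I$, identify $\mathrm{Tor}^R_\bullet(R/I,M)$ with the homology of $K_\bullet(x_1,\ldots,x_m;M)$, and invoke acyclicity of the latter from $M$-regularity (the paper cites Matsumura, Thm 16.5(i), for this last step). No differences worth noting.
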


\begin{proof}
Since $x_1,...,x_m$ is $R$-regular, the Koszul complex $K(x_1,...,x_m;R)$ is a resolution of $R/I$. So $\mathrm{Tor}_{\bullet}^R(R/I,M)$ is the homology of $K(x_1,...,x_m;M) := K(x_1,...,x_m;R) \otimes_R M$ 
$$H_n(K(x_1,...,x_m;M)) \simeq \mathrm{Tor}_n^R(R/I,M), \qquad \forall n$$
But since $x_1,...,x_m$ is an $M$-regular sequence, the complex $K(x_1,...,x_m;M)$ is acyclic, see \cite[Thm 16.5(i)]{matsumura_1987}. So $\mathrm{Tor}_n^R(R/I,M)=0$ for $n>0$.
\end{proof}

\begin{lemma}[Thm 17.4, \cite{matsumura_1987}]\label{lem:CM}
Suppose $A$ is Cohen-Macaulay, and let $I=(x_1,...,x_n) \subset A$ be an ideal. If
$$\dim(A/I) = \dim(A) - n$$
then $x_1,...,x_n$ is an $A$-regular sequence.
\end{lemma}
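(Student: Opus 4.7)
Since this statement is cited from Matsumura, I will only sketch a proof plan; the argument is standard commutative algebra. The key property of Cohen--Macaulay rings I would exploit is unmixedness: every associated prime of $A$ is minimal, and for a CM local ring the dimension formula $\mathrm{ht}(J)+\dim(A/J)=\dim(A)$ holds for every ideal $J$. The hypothesis $\dim(A/I)=\dim(A)-n$ together with Krull's height theorem $\mathrm{ht}(I)\le n$ then forces $\mathrm{ht}(I)=n$ exactly.

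The plan is to induct on $n$, with $n=0$ vacuous. For the base case $n=1$ I need $x_1$ to be a non-zerodivisor on $A$. In a CM ring every associated prime of $A$ is minimal, so if $x_1$ lay in some associated prime $\mathfrak{p}$, then $A/\mathfrak{p}$ would be a quotient of $A/(x_1)$ with $\dim(A/\mathfrak{p})=\dim(A)$ (by equidimensionality of CM rings at $\mathfrak{p}$), contradicting $\dim(A/(x_1))=\dim(A)-1$. Hence $x_1$ is a non-zerodivisor.

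For the inductive step I would first verify $\dim(A/(x_1,\dots,x_{n-1}))=\dim(A)-(n-1)$: Krull's theorem gives $\ge$, and the fact that adjoining one generator drops dimension by at most one gives $\le$. By induction $x_1,\dots,x_{n-1}$ is $A$-regular, so $R := A/(x_1,\dots,x_{n-1})$ is Cohen--Macaulay of dimension $\dim(A)-(n-1)$ (modding a CM ring by a regular sequence preserves CM-ness). The image $\bar x_n$ in $R$ then satisfies $\dim(R/(\bar x_n))=\dim(R)-1$, so by the base case applied to $R$ the element $\bar x_n$ is a non-zerodivisor on $R$, completing the induction.

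The main obstacle is justifying the dimension identity $\mathrm{ht}(J)+\dim(A/J)=\dim(A)$ that I used implicitly. For non-local Noetherian rings this requires $A$ to be catenary and equidimensional; the former is automatic for CM rings, but the latter takes some care globally. A standard way to sidestep the issue is to first reduce to the local case by localizing at a minimal prime of $I$ of height $n$ and using that regularity of a sequence can be detected at primes containing the ideal; in the local CM setting one may alternatively finish by extending $x_1,\dots,x_n$ to a full system of parameters and invoking the fact that every system of parameters in a CM local ring is a regular sequence.
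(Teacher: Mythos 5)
The paper offers no proof of this lemma at all: it is quoted verbatim (minus the local hypothesis) from Matsumura, Theorem 17.4. Your sketch is the standard textbook proof of that cited theorem, and for a Cohen--Macaulay \emph{local} ring it is correct and complete in outline: unmixedness of CM local rings handles the base case, the two-sided dimension count (Krull's height theorem for $\geq$, ``one generator drops dimension by at most one'' for $\leq$) gives $\dim(A/(x_1,\dots,x_{n-1}))=\dim(A)-(n-1)$, and the quotient by a regular sequence stays CM, so the induction closes. The alternative you mention --- extend $x_1,\dots,x_n$ to a system of parameters and use that every system of parameters in a CM local ring is regular --- is exactly Matsumura's route.

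One caveat on your last paragraph. The global issue you flag is real: as stated, with no local or equidimensionality hypothesis, the lemma is actually false. For $A=k[x]\times k[y,z]$ (which is CM, of dimension $2$) and $I$ generated by $(x,y)$ and $(0,z)$, one has $\dim(A/I)=0=\dim(A)-2$, yet no ordering of these two generators is a regular sequence, since $(0,z)$ is a zerodivisor modulo the other and on $A$ itself. Your proposed repair --- localize at \emph{a} minimal prime of $I$ of height $n$ --- does not close this gap: being a non-zerodivisor must be checked at every associated prime, equivalently at every maximal ideal containing $I$, and in the example above the sequence is regular after localizing at the height-$2$ minimal prime but not globally. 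The correct global hypothesis is that $A$ be equidimensional (or that one verify the dimension identity locally at all maximal ideals over $I$). None of this affects the paper: the lemma is applied to $A=\CC[\cN]$ with homogeneous linear forms $x_i$, and $\cN$ is irreducible, so $A$ is a graded domain and the graded-local version of your argument applies verbatim.
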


\begin{prop}\label{prop:commalg}
Suppose $X$ is a smooth Noetherian scheme and let $Y$,  $Z$ be closed subschemes of $X$. Write $i: Y \hookrightarrow X$ and $j: Z \hookrightarrow X$ for the inclusions and form the Cartesian diagram of schemes
\begin{center}
    \begin{tikzcd}
    Z \ar[r,hookrightarrow, "j"] & X\\
    Z \cap Y \ar[r,hookrightarrow] \ar[u,hookrightarrow] & Y \ar[u,hookrightarrow,"i"]
    \end{tikzcd}
\end{center}
Assume
\begin{itemize}
    \item[(i)] $Y$ is smooth.
    \item[(ii)] $Z$ is Cohen-Macaulay.
    \item[(iii)] $\dim(Z \cap Y) = \dim(Z) + \dim(Y) -\dim(X)$.
\end{itemize}
Then
\begin{equation}\label{eq:Ln}L_ni^*(j_*\mathcal{O}_Z) =0, \qquad n >0\end{equation}
\end{prop}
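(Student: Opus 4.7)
The plan is to reduce the statement to a Tor-vanishing question on local rings and then apply the two preceding lemmas in sequence. Since the sheaves $L_n i^*(j_* \mathcal{O}_Z)$ are supported on $Y \cap Z$, the claim can be checked after passing to the local ring $A = \mathcal{O}_{X,p}$ at an arbitrary point $p \in Y \cap Z$. Writing $I, J \subset A$ for the ideals of $Y$ and $Z$ respectively, the stalk $L_n i^*(j_* \mathcal{O}_Z)_p$ is computed by $\mathrm{Tor}^A_n(A/I, A/J)$, so the problem becomes
$$\mathrm{Tor}^A_n(A/I, A/J) = 0 \quad \text{for all } n > 0.$$

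Next I would exploit the smoothness hypotheses on $X$ and $Y$. A smooth closed subscheme of a smooth scheme is a regular embedding, so $I$ is generated by an $A$-regular sequence $x_1, \ldots, x_m$ of length $m = \dim(X) - \dim(Y)$ (this is the standard fact that in a regular local ring $A$, any ideal whose quotient is again regular is generated by a regular sequence). By Lemma \ref{lem:Tor} applied with $R = A$ and $M = A/J$, the desired Tor-vanishing will follow as soon as we verify that the images $\bar{x}_1, \ldots, \bar{x}_m \in A/J$ form an $(A/J)$-regular sequence.

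This last step is where the Cohen-Macaulay hypothesis on $Z$ and the dimension assumption (iii) enter, and it is the heart of the argument. Since $Z$ is Cohen-Macaulay, $A/J$ is a Cohen-Macaulay local ring of dimension $\dim(Z)$. The ideal $(\bar{x}_1, \ldots, \bar{x}_m) \subset A/J$ cuts out $Z \cap Y$ near $p$, and hypothesis (iii) gives
$$\dim\bigl((A/J)/(\bar{x}_1, \ldots, \bar{x}_m)\bigr) = \dim(Z \cap Y) = \dim(Z) - m = \dim(A/J) - m.$$
Lemma \ref{lem:CM} applied to the Cohen-Macaulay local ring $A/J$ and the $m$-generated ideal $(\bar{x}_1, \ldots, \bar{x}_m)$ then produces the required regularity, completing the reduction.

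The main subtlety I expect concerns the dimension bookkeeping at non-generic points: Lemma \ref{lem:CM} is formulated for local rings, so at a given point $p$ one needs the dimension identity to hold on the relevant components of $Y$, $Z$, and $Z \cap Y$ passing through $p$, not merely globally. Under mild equidimensionality assumptions---which hold in the target application with $X = \fg^*$, $Y = \fp^*$, and $Z = \cN$, since $\cN$ is known to be equidimensional---this is automatic; in the general case one reduces to the equidimensional situation by localizing further at individual components. Apart from this technical point, the argument is a clean two-step synthesis: smoothness of $Y \subset X$ gives a regular sequence cutting out $Y$, and Cohen-Macaulayness of $Z$ together with the dimension formula propagates regularity of this sequence down to $A/J$.
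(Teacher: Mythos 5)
Your argument is correct and is essentially the paper's proof: reduce to the affine/local setting, use smoothness of $Y\subset X$ to write $I$ as generated by a regular sequence of length $m=\dim(X)-\dim(Y)$, apply Lemma \ref{lem:CM} to the Cohen--Macaulay ring $\mathcal{O}_Z$ together with hypothesis (iii) to see that this sequence stays regular on $\mathcal{O}_Z$, and conclude by Lemma \ref{lem:Tor}. Your closing remark on equidimensionality addresses a point the paper's proof passes over silently (its Lemma \ref{lem:CM} is really a statement about local rings), and your resolution of it is the right one.
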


\begin{proof}
The statement is local in $X$, so we can assume all schemes are affine. Let $X=\mathrm{Spec}(R)$, $Y=\mathrm{Spec}(R/I)$ and $Z=\mathrm{Spec}(A)$, so that $Z\cap Y = \mathrm{Spec}(A/I)$. Since $Y$ is smooth, we can find an $R$-regular sequence $x_1,...,x_m\in R$ such that $I=(x_1,...,x_m)$, where $m=\dim(X)-\dim(Y)$. Now by Lemma \ref{lem:CM}, $(x_1,...,x_m)$ is an $A$-regular sequence. So by Lemma \ref{lem:Tor} (applied to the $R$-module $M=A$)
$$\mathrm{Tor}_n^R(R/I,A) =0, \qquad n>0$$
This is equivalent to (\ref{eq:Ln}). 
\end{proof}

\section{Regular functions on $\cN_{\theta}$}\label{sec:splitcase}

First, we will assume that $G_{\RR}$ is split. This means that $G$ contains a maximal torus $H_s \subset G$ on which $\theta$ acts by inversion. Our first lemma shows that the codimension of $\cN_{\theta} \subset \cN$ equals the codimension of $\fp^* \subset \fg^*$.

\begin{lemma}\label{lem:splitdim}
$\dim(\cN_{\theta}) = \dim(\cN) + \dim(\fp) - \dim(\fg)$
\end{lemma}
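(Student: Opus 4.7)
The plan is to extract the claim from two classical dimension results and then verify a single arithmetic identity, with the split hypothesis entering at exactly one point.

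The first ingredient is the standard formula $\dim \cN = \dim \fg - \mathrm{rk}(\fg)$, which follows from the fact that $\cN$ is the zero fiber of the adjoint quotient $\fg^* \to \fg^*//G$, whose base is polynomial of dimension $\mathrm{rk}(\fg)$. The second ingredient is the Kostant--Rallis analogue for symmetric pairs \cite{KostantRallis1971}: for any maximal abelian subspace (Cartan subspace) $\fa \subset \fp$, the ring $\CC[\fp^*]^K$ is a polynomial algebra on $\dim \fa$ generators, and $\cN_\theta$ is the equidimensional zero fiber of the associated quotient map $\fp^* \to \fp^*//K$, of dimension $\dim \fp - \dim \fa$.

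The split hypothesis is used only to identify $\dim \fa$ with $\mathrm{rk}(\fg)$. By assumption, there is a maximal torus $H_s \subset G$ on which $\theta$ acts by inversion, so $\fh_s \subset \fp$ and $\fh_s$ is abelian. As a Cartan subalgebra of $\fg$, $\fh_s$ is self-centralizing in $\fg$, and a fortiori any abelian subspace of $\fp$ containing it must coincide with it; so $\fh_s$ is maximal abelian in $\fp$ and we may take $\fa = \fh_s$, giving $\dim \fa = \dim \fh_s = \mathrm{rk}(\fg)$.

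Assembling, $\dim \cN_\theta = \dim \fp - \dim \fa = \dim \fp - \mathrm{rk}(\fg) = \dim \fp + \dim \cN - \dim \fg$, which is the lemma. There is no genuine obstacle: all the substance lies in Kostant--Rallis, and the remaining content is an integer identity together with the observation that $\fh_s \subset \fp$ in the split case. (Note that this is exactly the place where the stronger ``split'' hypothesis, rather than only ``quasi-split'', is needed: for quasi-split groups one still has an analogous dimension formula for $\cN_\theta$, but $\dim \fa$ may be strictly smaller than $\mathrm{rk}(\fg)$, so the codimension of $\cN_\theta$ in $\cN$ would not match $\dim \fk$.)
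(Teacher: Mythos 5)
Your proposal is correct and follows essentially the same route as the paper: both arguments rest on the Kostant--Rallis formula $\dim(\cN_{\theta}) = \dim(\fp) - \dim(\fh_s)$ together with the identity $\dim(\cN) = \dim(\fg) - \mathrm{rk}(\fg)$ (which the paper obtains via the Iwasawa decomposition and $\dim(\cN) = 2\dim(\fn)$, and you obtain via the adjoint quotient --- the same standard fact). Your observation that $\fh_s \subset \fp$ is self-centralizing, hence a Cartan subspace, is exactly where the paper also uses the split hypothesis.
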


\begin{proof}
By \cite[Prop 9]{KostantRallis1971}
\begin{equation}\label{eq:dimformula1}\dim(\cN_{\theta}) = \dim(\fp)-\dim(\mathfrak{h}_s)\end{equation}
By the Iwasawa decomposition
\begin{equation}\label{eq:dimformula2}\dim(\fg)  = \dim(\fk) + \dim(\mathfrak{h}_s) + \dim(\mathfrak{n})
\end{equation}
Finally
\begin{equation}\label{eq:dimformula3}\dim(\cN) = 2\dim(\mathfrak{n}) = \dim(\mathfrak{g}) - \dim(\mathfrak{h}_s)\end{equation}
Combining (\ref{eq:dimformula1}), (\ref{eq:dimformula2}), and (\ref{eq:dimformula3}) proves the lemma.
\end{proof}

\begin{cor}\label{cor:formulasplit}
There is an equality in $K^{\widetilde{K}}(\cN_{\theta})$
\begin{equation}\label{eq:split1}[\cO_{\cN_{\theta}}] = i^*[\cO_{\cN}]\end{equation}
and hence an equality in $K_{aa}(\widetilde{K})$
\begin{equation}\label{eq:split2}\CC[\cN_{\theta}]|_{\widetilde{K}} = \CC[\cN]|_{\widetilde{K}} \otimes [\wedge(\fk)]\end{equation}
\end{cor}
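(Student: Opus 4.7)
The proof plan has two pieces: establish (\ref{eq:split1}) by invoking the commutative algebra machinery of Section \ref{sec:commalg}, then deduce (\ref{eq:split2}) formally from Corollary \ref{cor:main}.

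For (\ref{eq:split1}), I would apply Proposition \ref{prop:commalg} to the Cartesian square with $X = \fg^*$, $Y = \fp^*$, $Z = \cN$, so that $Z \cap Y = \cN_{\theta}$. All three hypotheses are readily verified in the split setting: $\fp^*$ is a linear subspace of $\fg^*$, hence smooth; $\cN$ is Cohen-Macaulay by a classical theorem of Kostant; and the dimension equality $\dim(\cN_{\theta}) = \dim(\cN) + \dim(\fp) - \dim(\fg)$ is precisely the content of Lemma \ref{lem:splitdim}. Proposition \ref{prop:commalg} then yields $L_n i^*(j_*\cO_{\cN}) = 0$ for all $n > 0$, so in $K^{\widetilde{K}}(\fp^*)$ the class $i^*[\cO_{\cN}]$ is represented by the single sheaf $\cO_{\cN} \otimes_{\CC[\fg^*]} \CC[\fp^*]$, i.e.\ by the structure sheaf of the scheme-theoretic intersection $\cN \cap_{\mathrm{sch}} \fp^*$.

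The remaining step for (\ref{eq:split1}) is to identify this scheme-theoretic intersection with the reduced variety $\cN_{\theta}$, equivalently to check that $\cN \cap_{\mathrm{sch}} \fp^*$ is reduced. I would obtain this from \cite{KostantRallis1971}: in the quasi-split case (which contains the split case), Kostant and Rallis prove that the ideal of $\cN_{\theta}$ in $\CC[\fp^*]$ is generated by the restrictions to $\fp^*$ of the positive-degree $G$-invariants on $\fg^*$. Since these restrictions are exactly the generators of the ideal defining $\cN \cap_{\mathrm{sch}} \fp^*$, the scheme-theoretic intersection is reduced and equals $\cN_{\theta}$. This gives (\ref{eq:split1}).

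For (\ref{eq:split2}), I would specialize Corollary \ref{cor:main} to $[\mathcal{E}] = [\cO_{\cN}]$: identity (\ref{eq:mainequation}) reads
$$\Gamma(i^*[\cO_{\cN}])|_{\widetilde{K}} = \CC[\cN]|_{\widetilde{K}} \otimes [\wedge(\fk)],$$
and by (\ref{eq:split1}) the left-hand side is $\Gamma([\cO_{\cN_{\theta}}])|_{\widetilde{K}} = \CC[\cN_{\theta}]|_{\widetilde{K}}$. The identity then holds in $K_a(\widetilde{K})$, and both sides actually lie in $K_{aa}(\widetilde{K})$ by the analog of Proposition \ref{prop:g*facts}(i) for $\widetilde{K}$. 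The main obstacle is the reducedness step: Proposition \ref{prop:commalg} by itself only outputs the scheme-theoretic intersection, which could a priori be non-reduced. The split hypothesis does two things for us — it makes the dimension count of Lemma \ref{lem:splitdim} go through (so that Proposition \ref{prop:commalg} applies), and it puts us in the Kostant-Rallis setting, where reducedness of $\cN \cap_{\mathrm{sch}} \fp^*$ is known.
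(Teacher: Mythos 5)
Your proposal is correct and follows essentially the same route as the paper: apply Proposition \ref{prop:commalg} with $X=\fg^*$, $Y=\fp^*$, $Z=\cN$ (using Kostant's theorem for Cohen-Macaulayness and Lemma \ref{lem:splitdim} for the dimension count), invoke Kostant--Rallis for reducedness of the scheme-theoretic intersection, and then deduce (\ref{eq:split2}) from Corollary \ref{cor:main}. You correctly identify the reducedness step as the point that Proposition \ref{prop:commalg} alone does not cover; the paper handles it the same way, citing \cite[Theorem 14]{KostantRallis1971}.
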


\begin{proof}
For (\ref{eq:split1}), we will apply Proposition \ref{prop:commalg}. So let $X=\fg^*$, $Z = \cN$, and $Y=\fp^*$. Clearly, $X$ and $Y$ are smooth. By \cite[Thm 0.1]{Kostant1963}, $\cN$ is a complete intersection, and therefore Cohen-Macaulay. Condition (iii) of Proposition \ref{prop:commalg} is the content of Lemma \ref{lem:splitdim}. So by Proposition \ref{prop:commalg}, we have
$$L_ni^*(j_*\cO_{\cN}) =0, \qquad n>0$$
and therefore
$$i^*[\cO_{\cN}] = \sum_n (-1)^n [L_ni^*j_*\cO_{\cN}] = [i^*j_*\cO_{\cN}] = [\cO_{\cN_{\theta}}]$$
For the final equality, we use the well-known fact that scheme-theoretic intersection $\cN \cap \fp^*$ is reduced, see \cite[Theorem 14]{KostantRallis1971}. This proves (\ref{eq:split1}). Now (\ref{eq:split2}) follows from (\ref{eq:split1}) and Corollary \ref{cor:main}.
\end{proof}

Corollary \ref{cor:formulasplit} can be easily extended to the case when $G_{\RR}$ is split modulo center. 

\begin{example}
Let $G = \mathrm{SL}_2(\CC)$ and let $\theta(g) = (g^{-1})^t$ (this is the involution corresponding to split real form $\mathrm{SL}_2(\RR)$). Then $K = \mathrm{SO}_2(\CC)$. Write
$$\mathrm{Irr}(G) = \{\tau_m \mid m = 0,1,2,...\}, \qquad \mathrm{Irr}(K) = \{\chi_n \mid n \in \ZZ\}$$
(here $\tau_m$ is the irreducible with highest weight $m$ and $\chi_n$ is the degree-$n$ character of $\mathrm{SO}_2(\CC)$). We have the following branching rules
$$\tau_m|_K = \chi_{-2m} + \chi_{-2m+2} + ... + \chi_{2m}.$$
From Proposition \ref{prop:complexcase}, we deduce
$$\CC[\cN]|_{\widetilde{G}} = \sum_{m=0}^{\infty} \tau_{2m}q^m$$
Also
$$[\wedge(\fk)] = \chi_0 - \chi_0q.$$
So by Corollary \ref{cor:formulasplit}
\begin{align*}
\CC[\cN_{\theta}]|_{\widetilde{K}} &= \CC[\cN]|_{\widetilde{K}} \otimes [\wedge(\fk)]\\
&= (\sum_{m=0}^{\infty} (\chi_{-2m}... + \chi_{2m})q^m) \otimes (\chi_0 - \chi_0q)\\
&= \sum_{m=0}^{\infty} ((\chi_{-2m}+...+\chi_{2m}) - (\chi_{-2m+2} +...+\chi_{2m-2}))q^m\\
&= \sum_{m=0}^{\infty} (\chi_{2m} +\chi_{-2m})q^m
\end{align*}
\end{example}

\section{Branching to $K$}\label{sec:mainresults}

In this section, we will use Corollary \ref{cor:formulasplit} to compute $\CC[\cN_{\theta}]|_{\widetilde{K}}$ as a formal integer combination of classes of the form $I(\Gamma)q^n$. 

Suppose $(H,\gamma,\Phi^+)$ is a continued Langlands parameter (see Definition \ref{def:Langlands}) and let $\chi \in K_f(K)$. It is easy to compute the tensor product $[I(H,\gamma,\Phi^+)]|_K \otimes \chi$ as a representation of $K$.

\begin{lemma}[Lem 12.13, \cite{Vogan2007}]\label{lem:tensorstandard}
Choose a finite multiset $S_H(\chi)$ in $X^*(H)$ such that
$$\chi|_{H^{\theta}} = \sum_{\mu \in S_H(\chi)} \mu|_{H^{\theta}}$$
Then there is an identity in $K_a(K)$
$$[I(H,\gamma,\Phi^+)]|_K \otimes \chi = \sum_{\mu \in S_H(\chi)} [I(H,\gamma+\mu,\Phi^+)]$$
\end{lemma}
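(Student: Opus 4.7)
The plan is to reduce the identity to Frobenius reciprocity (the projection formula) for induction from $H^\theta$ to $K$, using the fact that the $K$-restriction of a standard module is an induced representation whose inducing character depends linearly on $\gamma$.

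First I would recall the explicit formula for $[I(H,\gamma,\Phi^+)]|_K$ in $K_a(K)$. The standard module is built by parabolic (or cohomological) induction from the Cartan subgroup $H$, and a straightforward unwinding of the construction (as in Vogan, \cite{Vogan2007}) yields an identity of the form
\[
[I(H,\gamma,\Phi^+)]|_K \;=\; \mathrm{ind}_{H^\theta}^{K}\bigl(\xi(\gamma,\Phi^+)\bigr)
\]
in $K_a(K)$, where $\xi(\gamma,\Phi^+)$ is a virtual character of $H^\theta$ obtained by restricting the one-dimensional $(\mathfrak h,H^\theta)$-module $\gamma_0$ to $H^\theta$ and twisting by the appropriate $\rho$-shift determined by $\Phi^+$. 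The crucial feature is that this construction is \emph{linear} in the parameter, in the sense that
\[
\xi(\gamma+\mu,\Phi^+) \;=\; \xi(\gamma,\Phi^+) \otimes \mu|_{H^\theta}
\qquad \text{for every } \mu \in X^*(H),
\]
since adding $\mu$ to $\gamma$ only modifies $\gamma_0$ by the character $\mu$ of $H$ and leaves the $\rho_{i\RR}$-shift (which depends only on $\Phi^+$) unchanged.

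Next I would apply the projection formula for induction. For any $\chi \in K_f(K)$,
\[
\mathrm{ind}_{H^\theta}^{K}\bigl(\xi(\gamma,\Phi^+)\bigr) \otimes \chi
\;=\; \mathrm{ind}_{H^\theta}^{K}\bigl(\xi(\gamma,\Phi^+) \otimes \chi|_{H^\theta}\bigr).
\]
Using the hypothesis $\chi|_{H^\theta} = \sum_{\mu\in S_H(\chi)} \mu|_{H^\theta}$ and the additivity of induction, this becomes
\[
\sum_{\mu\in S_H(\chi)} \mathrm{ind}_{H^\theta}^{K}\bigl(\xi(\gamma,\Phi^+) \otimes \mu|_{H^\theta}\bigr)
\;=\; \sum_{\mu\in S_H(\chi)} \mathrm{ind}_{H^\theta}^{K}\bigl(\xi(\gamma+\mu,\Phi^+)\bigr)
\;=\; \sum_{\mu\in S_H(\chi)} [I(H,\gamma+\mu,\Phi^+)]|_K,
\]
where the middle equality uses the linearity of $\xi$ in $\gamma$ and the last equality is the induced-representation formula for the standard modules attached to the shifted parameters.

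The main technical obstacle is verifying the explicit induced-character formula for $[I(H,\gamma,\Phi^+)]|_K$ and, in particular, checking that the $\rho$-shifts entering its definition (for instance $\rho_{i\RR}$, and the contributions from complex and real roots) are independent of $\gamma_0$ so that the translation $\gamma \mapsto \gamma+\mu$ genuinely acts as tensoring by $\mu|_{H^\theta}$. Once this linearity is in place, the remainder is formal: everything follows from Frobenius reciprocity plus the additivity of induction over multisets. Since the statement lives in $K_a(K)$, I need not worry about exactness issues — only the character identity matters.
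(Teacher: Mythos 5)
First, note that the paper does not prove this lemma: it is quoted verbatim as Lemma 12.13 of \cite{Vogan2007}, so there is no in-paper argument to compare against. Your overall strategy---exhibit $[I(H,\gamma,\Phi^+)]|_K$ as induced from $H^\theta$ with inducing data depending on $\gamma$ only through a tensor factor $\gamma|_{H^\theta}$, then invoke the projection formula and additivity over the multiset $S_H(\chi)$---is exactly the mechanism behind the cited proof, and the purely formal part of your argument (projection formula, linearity in the parameter, additivity of induction) is sound.

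The gap is in the input formula, which you yourself flag as the main obstacle but then state incorrectly. The identity $[I(H,\gamma,\Phi^+)]|_K = \mathrm{ind}_{H^\theta}^{K}\bigl(\xi(\gamma,\Phi^+)\bigr)$ with $\xi(\gamma,\Phi^+)$ a \emph{finite-dimensional} virtual character of $H^\theta$ fails whenever $H$ has imaginary roots. Take $G_{\RR}=\mathrm{SL}(2,\RR)$ and $H$ the compact Cartan, so $H^\theta=H=K=\mathrm{SO}(2)$: induction from $K$ to $K$ is the identity, so your right-hand side is finite-dimensional, whereas a discrete series restricted to $K$ is $\chi_n+\chi_{n+2}+\chi_{n+4}+\cdots$. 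The correct statement (Blattner's formula in the form Vogan uses) carries an additional $\gamma$-\emph{independent} tensor factor in the inducing data, essentially a symmetric algebra $S(\mathfrak{u}\cap\fp)$ on certain root spaces determined by $\Phi^+$. This does not destroy your argument---the projection formula applies just as well to $\mathrm{ind}_{H^\theta}^{K}\bigl(\gamma|_{H^\theta}\otimes V\bigr)$ for any fixed admissible $H^\theta$-module $V$, and the shift $\gamma\mapsto\gamma+\mu$ still acts by tensoring with $\mu|_{H^\theta}$---but as written your key identity is false, and to complete the proof you must (a) insert the correct $\gamma$-independent factor and (b) check that the projection formula is valid in $K_a(K)$ when that factor is infinite-dimensional. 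Both points are precisely what \cite{Vogan2007} establishes en route to Lemma 12.13.
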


We will use Lemma \ref{lem:tensorstandard} (together with Zuckerman's character formula for the trivial representation) to compute $\CC[\cN_{\theta}]|_{\widetilde{K}}$ in terms of the classes $I(\Gamma)q^n$. For an arbitrary class $\chi \in K_f(K)$, it may be difficult to find a multiset $S_H(\chi)$ as in Lemma \ref{lem:tensorstandard}. Fortunately, in our setting, $\chi$ is \emph{not} arbitrary. For the problem at hand, we will need to compute $S_H(\chi)$ in the following two cases: 
\begin{enumerate}
    \item $\chi$ is the restriction to $K$ of an irreducible representation $\tau_{\lambda}$ of $G$.
    \item $\chi$ is the class $\wedge^n(\fk)$.
\end{enumerate}

First, suppose $\chi = \tau_{\lambda}|_K$. By the Weyl character formula (\ref{eq:Weyl}), we have
$$\tau_{\lambda}|_H = \sum_{\mu \in X^*(H)} \mathcal{M}(\lambda,\mu)e^{\mu}$$
So we can take 
$$S_H(\tau_{\lambda}) = \{\mathcal{M}(\lambda,\mu)e^{\mu} \mid \mu \in X^*(H)\}$$
(the coefficients $\mathcal{M}(\lambda,\mu)$ above denote the multiset multiplicities. We will use similar notation below).

Next, suppose $\chi = \fk$. Choose a subset $\Delta_{\CC}' \subset \Delta_{\CC}$ such that for each $\alpha \in \Delta_{\CC}$, exactly one of $\{\alpha,\theta \alpha\}$ appears in $\Delta'_{\CC}$. Then there is a decomposition of $\fk$ into weight spaces for $H^{\theta}$
$$\fk \simeq \bigoplus_{\alpha \in \Delta_c} \fg_{\alpha} \oplus \bigoplus_{\alpha \in \Delta_{\CC}'} (1+d\theta)\fg_{\alpha} \oplus \bigoplus_{\alpha \in \Delta^+_{\RR}} (1+d\theta) \fg_{\alpha}$$
So we can take 
$$S_H(\fk) = \{e^{\alpha} \mid \alpha \in \Delta_c\} \cup \{e^{\alpha} \mid \alpha \in \Delta_{\CC}'\} \cup \{|\Delta_{\RR}^+|e^0\}$$
More generally
$$S_H(\wedge^n\fk) = \{\sum R \mid R \subseteq S_H(\mathfrak{k}), \ |R| = n\}$$
Zuckerman's character formula for the trivial representation (see \cite[Thm 9.4.16]{Vogan1981}) can be interpreted as an identity in $K_a(K)$
$$\mathrm{triv} = \sum_H \sum_{\Delta^+} (-1)^{\ell(\Delta^+)} [I(H,\rho_{i\RR},\Delta_{i\RR}^+)]|_K$$
The outer sum runs over $K$-conjugacy classes of $\theta$-stable maximal tori $H \subset G$ and the inner sum over $W(K,H^{\theta})$-conjugacy classes of positive systems $\Delta^+ \subset \Delta(G,H)$. The integer $\ell(\Delta^+)$ is the codimension of the $K$-orbit on the flag variety containing the Borel subalgebra $\mathfrak{b} \subset \fg$ corresponding to the positive system $\Delta^+$. Now assume that $G_{\RR}$ is split modulo center. Using Corollary \ref{cor:formulasplit}, we obtain an identity in $K_a(\widetilde{K})$
$$\CC[\cN_{\theta}]|_{\widetilde{K}} = \left(\sum_H \sum_{\Delta^+} (-1)^{\ell(\Delta^+)} [I(H,\rho_{i\RR},\Delta_{i\RR}^+)]|_K\right) \otimes \CC[\cN]|_{\widetilde{K}} \otimes [\wedge(\fk)]$$
Using Proposition \ref{prop:complexcase} and Lemma \ref{lem:tensorstandard}, we can rewrite the right hand side in terms of classes of the form $[I(\Gamma)]q^n$

\begin{theorem}\label{thm:branching}
Assume $G_{\RR}$ is split modulo center. Then there is an identity in $K_a(\widetilde{K})$
$$\CC[\cN_{\theta}]|_{\widetilde{K}} =  \sum_H \sum_{\Delta^+} \sum_{\lambda \in \Lambda^+} \sum_{\mu \in \Lambda} \sum_{R \subseteq S_H(\fk)} (-1)^{\ell(\Delta^+)+|R|} \mathcal{M}(\lambda,\mu) \mathcal{M}_q(\lambda,0) [I(H,\rho_{i\RR} + \mu + |R|,\Delta_{i\RR}^+)]q^{|R|}
$$
\end{theorem}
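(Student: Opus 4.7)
The plan is to combine the three explicit formulas already developed in the paper: Corollary \ref{cor:formulasplit} (which rewrites $\CC[\cN_\theta]|_{\widetilde{K}}$ as $\CC[\cN]|_{\widetilde{K}} \otimes [\wedge(\fk)]$), Zuckerman's character formula for the trivial $K$-representation (displayed just before the theorem), and Proposition \ref{prop:complexcase} (which expresses $\CC[\cN]|_{\widetilde{G}}$ as a formal sum of irreducible $G$-representations weighted by Lusztig's $q$-analog $\mathcal{M}_q$). Tensoring on the left with the trivial $K$-representation turns Corollary \ref{cor:formulasplit} into the identity
$$\CC[\cN_\theta]|_{\widetilde{K}} = \mathrm{triv} \otimes \CC[\cN]|_{\widetilde{K}} \otimes [\wedge(\fk)].$$
Substituting Zuckerman's formula for $\mathrm{triv}$ and Proposition \ref{prop:complexcase} (restricted from $\widetilde{G}$ to $\widetilde{K}$) for $\CC[\cN]|_{\widetilde{K}}$ reduces the theorem to computing two tensor products of a standard module of the form $[I(H,\rho_{i\RR},\Delta_{i\RR}^+)]|_K$: first with $\tau_\lambda|_K$, and then with $[\wedge(\fk)]$.

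Both tensor products are computed using Lemma \ref{lem:tensorstandard}. For the factor $\tau_\lambda|_K$, the Weyl character formula (\ref{eq:Weyl}) provides $S_H(\tau_\lambda) = \{\mathcal{M}(\lambda,\mu)\cdot e^\mu : \mu \in \Lambda\}$, so that
$$[I(H,\rho_{i\RR},\Delta_{i\RR}^+)]|_K \otimes \tau_\lambda|_K = \sum_{\mu \in \Lambda} \mathcal{M}(\lambda,\mu)\,[I(H,\rho_{i\RR}+\mu,\Delta_{i\RR}^+)]|_K.$$
For the factor $[\wedge(\fk)]$, the weight-space decomposition of $\fk$ under $H^\theta$ recorded in the discussion preceding the theorem gives $S_H(\wedge^n\fk) = \{\sum R : R \subseteq S_H(\fk),\ |R| = n\}$. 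Since $\fk$ lies in $q$-degree $1$, the class $[\wedge^n(\fk)]$ contributes a factor of $(-1)^n q^n$, and Lemma \ref{lem:tensorstandard} yields
$$[I(H,\gamma,\Delta_{i\RR}^+)]|_K \otimes [\wedge(\fk)] = \sum_{R \subseteq S_H(\fk)} (-1)^{|R|}q^{|R|}\,[I(H,\gamma + \textstyle\sum R,\Delta_{i\RR}^+)]|_K.$$
Assembling the three substitutions and carrying along the sign $(-1)^{\ell(\Delta^+)}$ from Zuckerman's formula immediately produces the right-hand side of the theorem.

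The proof is essentially mechanical; the only point requiring attention is the legitimacy of the substitutions as identities in $K_a(\widetilde{K})$, since three of the resulting outer sums (over $\lambda$, $\mu$, and $R$) are infinite. Each individual application of Zuckerman's formula, Proposition \ref{prop:complexcase}, and Lemma \ref{lem:tensorstandard} is already an identity in the appropriate Grothendieck group, so the composite substitution makes sense as a formal identity provided that, for each $\tau q^n \in \mathrm{Irr}(\widetilde{K})$, only finitely many terms on the right-hand side contribute nontrivially to its coefficient. This local-finiteness follows from the admissibility of $\CC[\cN_\theta]|_{\widetilde{K}}$ as a $\widetilde{K}$-representation, which is part (i) of the $\widetilde{K}$-analog of Proposition \ref{prop:g*facts}. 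Thus no genuine obstacle arises, and the main work lies in setting up the bookkeeping cleanly.
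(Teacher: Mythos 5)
Your proposal is correct and follows exactly the paper's own argument: the theorem is obtained by substituting Zuckerman's character formula for $\mathrm{triv}$, Proposition \ref{prop:complexcase} for $\CC[\cN]|_{\widetilde{K}}$, and the multisets $S_H(\tau_\lambda)$ and $S_H(\wedge^n\fk)$ via Lemma \ref{lem:tensorstandard} into the identity of Corollary \ref{cor:formulasplit}, with your added local-finiteness remark being a sensible (if implicit in the paper) justification of the formal manipulations. Your writing of the shifted parameter as $\gamma + \sum R$ is in fact the form Lemma \ref{lem:tensorstandard} produces, which the theorem's displayed formula abbreviates.
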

Note that the terms on the right are not final. We can rewrite the sum in terms of final parameters using the Hecht-Schmid identities (this sort of thing is easy to do in atlas).

\begin{sloppypar} \printbibliography[title={References}] \end{sloppypar}

\end{document}